\documentclass[12pt]{amsart}
\usepackage{amscd,amssymb}
\usepackage[all]{xy}
\usepackage{color}
\usepackage[notref,notcite]{showkeys}

\begin{document}
\newtheorem{prop}{Proposition}[section]
\newtheorem{thm}[prop]{Theorem}
\newtheorem{lemma}[prop]{Lemma}
\newtheorem{cor}[prop]{Corollary}
\newtheorem{Question}[prop]{Question}

\newtheorem{Example}[prop]{Example}
\newtheorem{Examples}[prop]{Examples}
\newtheorem{Remark}[prop]{Remark} 

\newcommand{\extto}{\xrightarrow}
\newcommand{\cA}{{\mathcal A}}
\newcommand{\cB}{{\mathcal B}}
\newcommand{\cD}{{\mathcal D}}
\newcommand{\cY}{{\mathcal Y}}
\newcommand{\cU}{{\mathcal U}}
\newcommand{\cJ}{{\mathcal J}}
\newcommand{\cI}{{\mathcal I}}
\newcommand{\cF}{{\mathcal F}}
\newcommand{\cX}{{\mathcal X}}
\newcommand{\cZ}{{\mathcal Z}}
\newcommand{\cP}{{\mathcal P}}
\newcommand{\cQ}{{\mathcal Q}}
\newcommand{\cL}{{\mathcal L}}
\newcommand{\cM}{{\mathcal M}}
\newcommand{\cK}{{\mathcal K}}
\newcommand{\cH}{{\mathcal H}}
\newcommand{\cT}{{\mathcal T}}
\newcommand{\cG}{{\mathcal G}}
\newcommand{\cE}{{\mathcal E}}
\newcommand{\cC}{{\mathcal C}}
\newcommand{\cO}{{\mathcal O}}
\newcommand{\dpp}{\prime\prime}
\newcommand{\br}{{\bf r}}
\newcommand{\bx}{$_{\fbox{}}$\vskip .2in}
\newcommand{\Gr}{\operatorname{\mathbf{Gr}}\nolimits}
\newcommand{\Span}{\operatorname{Span}\nolimits}
\newcommand{\gr}{\operatorname{\mathbf{gr}}\nolimits}
\newcommand{\Mod}{\operatorname{\mathbf{Mod}}\nolimits}
\newcommand{\smod}{\operatorname{\mathbf{mod}}\nolimits}
\newcommand{\add}{\operatorname{\mathbf{add}}\nolimits}
\newcommand{\Add}{\operatorname{\mathbf{Add}}\nolimits}
\newcommand{\End}{\mbox{End}}
\newcommand{\Hom}{\mbox{Hom}}
\renewcommand{\Im}{\mbox{Im}}
\newcommand{\tr}[2]{\operatorname{Tr_{#1}(#2)}\nolimits}

\newcommand{\pd}{\operatorname{pd}\nolimits}
\newcommand{\id}{\operatorname{id}\nolimits}
\newcommand{\soc}{\operatorname{Soc}\nolimits}
\newcommand{\rad}{\operatorname{Rad}\nolimits}
\newcommand{\Top}{\operatorname{Top}\nolimits}
\newcommand{\Ker}{\mbox{Ker}}
\newcommand{\Coker}{\mbox{Coker}}
\newcommand{\coker}{\mbox{coker}}
\newcommand{\Tor}{\mbox{Tor}}
\renewcommand{\dim}{\mbox{dim}}
\newcommand{\gldim}{\operatorname{gl.dim}\nolimits}
\newcommand{\findim}{\operatorname{fin.dim}\nolimits}
\newcommand{\Ext}{\operatorname{Ext}\nolimits}
\newcommand{\op}{^{op}}
\newcommand{\HH}{\operatorname{HH}\nolimits}
\newcommand{\pr}{^{\prime}}
\newcommand{\f}{\operatorname{fin}}
\newcommand{\Sy}{\operatorname{syz}}
\newcommand{\semi}{\mathbin{\vcenter{\hbox{$\scriptscriptstyle|$}}
\;\!\!\!\times }}
\newcommand{\fralg}{K\!\!<\!\!x_1,\dots,x_n\!\!>}
\newcommand{\Efin}{\operatorname{Efin}\nolimits}
\newcommand{\Lin}{\operatorname{Lin}\nolimits}
\newcommand{\Tlin}{\operatorname{Tlin}\nolimits}
\newcommand{\WKS}{\operatorname{WKS}\nolimits}
\newcommand{\cx}{\operatorname{cx}\nolimits}
\newcommand{\pcx}{\operatorname{pcx}\nolimits}
\newcommand{\tpcx}{\operatorname{tpcx}\nolimits}
\newcommand{\mpcx}{\operatorname{mpcx}\nolimits}
\newcommand{\tmpcx}{\operatorname{tmpcx}\nolimits}
\newcommand{\mto}{\hookrightarrow}
\newcommand{\AR} {Auslander-Reiten }
\newcommand{\gk}{\operatorname{GKdim}\nolimits}
\newcommand{\tip}{\operatorname{Tip}\nolimits}
\newcommand{\sgn}{\operatorname{sgn}\nolimits}
\newcommand{\grb}{Gr\"obner }

\newcommand{\rank}{\operatorname{rank}\nolimits}

\newcommand{\rep}[1]{\operatorname{\mathbf{Rep}}\nolimits(K,{#1})}
\newcommand{\srep}[1]{\operatorname{\mathbf{rep}}\nolimits(K,{#1})}
\newcommand{\Ag}{\cA_{\Gamma}}
\newcommand{\Agw}{\cA_{\Gamma_W}}
\newcommand{\CC}[4]{C^{{#1},{#2}}_{{#3},{#4}}}
\newcommand{\ecl}{e_{\cL}}
\newcommand{\eclp}{e'_{\cL}}

\title[Convex subquivers]
{Convex subquivers and the finitistic dimension}

\author[Green]{Edward L.\ Green}\address{Department of
Mathematics\\ Virginia Tech\\ Blacksburg, VA 24061\\
USA} \email{green@math.vt.edu}

\author[Marcos]{Eduardo N.\ Marcos}\address{Departamento de Matem\'atica\\
IME-USP, Universidade de S\~ao Paulo\\ Rua do Mat\~ao 1010\\
S\~ao Paulo SP, Brazil,  05586060}
\email{enmarcos@gmail.com}

\thanks{\footnotesize The second named author  has been supported by the thematic project of Fapesp
2014/09310-5. }

\begin{abstract} Let $\cQ$ be a quiver and $K$ a field.
We study the interrelationship of homological properties of algebras associated to convex subquivers of  $\cQ$ and  quotients of the path algebra $K\cQ$.  We introduce the
homological heart of $\cQ$ which is a particularly nice convex subquiver of $\cQ$.
For any algebra of the form $K\cQ/I$,  the algebra associated to $K\cQ/I$ and
the homological heart have similar homological properties.  We give an application
showing that
the finitistic dimension conjecture need only be proved for algebras with path connected
quivers.  \end{abstract}
\maketitle

\section{Introduction}The study of homological properties of algebras is made both interesting
and challenging by the fact that such properties are usually not inherited by subalgebras or
quotient algebras.  In the rare cases where homological  properties are inherited, one can
hopefully pass to a quotient or subalgebra for which the question being studied
is easier to attack.   The finitistic dimension conjecture for finite dimensional algebras \cite{B} is
one such problem.  The \emph{finitistic dimension} of a finite dimensional algebra is the supremum
of the projective dimensions of finitely generated 
modules having finite projective dimension and the Finitistic
Dimension Conjecture asserts that this supremum is finite.  The conjecture has been proven directly in 
a number of cases, for example, \cite{GKK, GZ,IT, AR},  and attempts have been made to study the conjecture
by reducing the conjecture to `simpler' rings \cite{HLM,C1,C2,C3,Ch}, but the conjecture
remains open.   A survey on the state of the art until 2007 can be found in \cite{C3}.

In this paper, we study rings of the form $K\cQ/I$ where $K$ is a field, $\cQ$ a quiver
and $I$ is an ideal.   We find conditions on $\cQ$, independent of $I$, so that there is a full subquiver
$\cL$ such that the homological properties of the algebra $K\cL/(I\cap K\cL)$  provide 
information about homological properties of $K\cQ/I$.   In particular, if $e'$ is the sum of the vertices not in $\cL$, we investigate
conditions that imply that the canonical a surjection $K\cQ/I\to (K\cQ/I)/\langle e'\rangle$ is
a homological epimorphism; that is, the  surjection induces a
fully faithful functor $\cD^b(\text{mod}(K\cQ/I)\to\cD^b(\text{mod}((K\cQ/I)/\langle e'\rangle))$ \cite{GL, K, APT,P,GP}.

Let $\cQ$ be a finite quiver.   We let  $\cL$ be a full convex
subcategory of $\cQ$; that is, if $p$ is a path from  a vertex in $\cL$ to a vertex in $\cL$,
then each arrow
and vertex of $p$ is in $\cL$.      The study of
convex subcategories is not new and they have appeared in the study of
the representation theory of finite dimensional algebras; for example, see
\cite{SK, AL, ACMT, DR}.  Noting that if $\cL$ is convex, then
$(K\cQ/I)/\langle e'\rangle$ is isomorphic to $K\cL/(I\cap K\cL)$,
(see Proposition \ref{prop-2def}).
we show that if $\cL$ is a convex subquiver of $\cQ$, then the canonical
surjection $K\cQ/I\to K\cL/(I\cap K\cL) $ is a homological embedding,
see Theorem \ref{thm-convex-ext}.

In Section \ref{homo heart}, for any quiver
 $Q$, we define a subquiver which we will call the homological heart of $\cQ$.
We show that there is a strong homological relation between a finite dimensional algebra 
$K\cQ/I$ and the algebra $K\cH/(I\cap K\cH)$, where $\cH$ is the homological heart
of $\cQ$. For instance the
finitistic dimension of one is finite if and only if the finitistic dimension of the other is
finite.   A similar concept was defined and studied in \cite{LMM}.

The paper ends with an application showing that the  Finitistic Dimension Conjecture only needs
to be proved in the case where the quiver is path connected.

If $\cQ$ is a quiver then $\cQ_0$ denotes the vertex set of $\cQ$,  $\cQ_1$
denotes the arrow set of $\cQ$.  If $K$ is a field, $K\cQ$ denotes the path algebra.
If $J$ denotes the ideal in a path algebra $K\cQ$ generated by the arrows of
$\cQ$, then an ideal $I$ in $K\cQ$ is \emph{admissible} if $J^n\subseteq
I\subseteq J^2$, for $n\ge 2$.  If $\Lambda$ is an algebra and $A$ is a subset of $\Lambda$,
then $\langle
A\rangle$ denotes the ideal in $\Lambda$ generated by $A$.  Unless otherwise
stated, modules are right modules and if $a$ and $b$ are arrows in a quiver $\cQ$,
$ab$ denotes the concatenation of first $a$ then $b$.

\section{Definitions and basic properties}
Throughout this      paper
let $K$ be a field, $\cQ$ a finite quiver, and let $K\cQ$ denote the path
algebra.   We say a subquiver $\cL$ of $\cQ$ is \emph{full} if for any two
vertices $v$ and $w$ of $\cL$,  all the arrows in $\cQ$ with
origin $v$ and terminus $w$ are also arrows in $\cL$.  A full subquiver is uniquely determined
by its vertex set and all subquivers considered in this paper are assumed to be full. 
If $X\subset \cQ_0$, then  the full subquiver of
$\cQ$ having vertex $X$  is called the
\emph{subquiver generated by $X$}. We freely identify a set of vertices with the full subquiver
it generates.  The \emph{empty subquiver} is defined
to be the subquiver with no vertices or arrows.
If $\cL$ and $ \cM$ are subquivers of $\cQ$, then $\cL\cup \cM$  (respectively,
$\cL\cap\cM$) is the full subquiver
generated by $\cL_0\cup \cM_0$ (resp., by $\cL_0\cap\cM_ 0$).

A fundamental concept that plays a central
role in this paper  is convexity.
 We say a full subquiver $\cL$ of $\cQ$ is \emph{convex}
if for any two vertices $v, w$ in $\cL$ and for any path $p$ from $v$ to $w$, every vertex occuring in $p$
is in $\cL$.
We introduce 3 other subquivers associated to a given subquiver.
Let $\cL$ be a full subquiver of $\cQ$.   Define $\cL^+$ and $\cL^-$ to be the full subquivers
such that the vertex set of $\cL^+$ is
\[\{v\in\cQ_0\mid v\notin\cL_0 \text{ and }
 \text{ there is a path in }\cQ\text{ from a vertex in }\cL_0\text{ to }v\}\] and the vertex set
of $\cL^-$ is
\[\{v\in\cQ_0\mid v\notin\cL_0 \text{ and }
\text{ there is a path in }\cQ\text{ from }v\text{ to a vertex in }\cL_0\}.\]

A third construction is  $\cL^o$ whose vertex set is
\[
\{v\in\cQ_0\mid v\notin\cL_0 \text{ and there are no paths between }v\text{ and any
vertex in }\cL\}.\]

It is easy to see that if $\cL$ is a full subquiver of $\cQ$  then
$\cQ=\cL\cup\cL^+\cup \cL^-\cup\cL^o$ and
$\cL$ is convex if and only if $\cL^+_0\cap\cL^-_0$ is empty.
In particular, we have 
\begin{prop}\label{prop-empty} Let $\cL$ is a subquiver of $\cQ$.  If either
$\cL^+$ or $\cL^-$ is empty then $\cL$ is convex.
\end{prop}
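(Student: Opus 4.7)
The plan is to treat the two cases separately (they are symmetric) and reduce to the observation, just recorded before the proposition, that convexity is equivalent to $\cL^+_0 \cap \cL^-_0 = \emptyset$. Since the intersection of the empty set with any set is empty, the conclusion will be immediate once one knows that observation. To make the note self-contained, I would instead argue directly from the definitions so that the short argument behind the ``easy to see'' remark is visible here.

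First I would set up the direct argument. Suppose $\cL^-$ is empty (the case $\cL^+$ empty is symmetric, swapping origins and termini throughout). Let $v, w \in \cL_0$ and let $p$ be any path in $\cQ$ from $v$ to $w$; pick an arbitrary vertex $u$ occurring in $p$. I need to show $u \in \cL_0$. Split $p$ as $p = p_1 p_2$ where $p_1$ is a subpath from $v$ to $u$ and $p_2$ is a subpath from $u$ to $w$; in particular $p_2$ is a path in $\cQ$ from $u$ to the vertex $w \in \cL_0$.

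Now I would derive the contradiction. If $u$ were not in $\cL_0$, then by the defining property of $\cL^-$, the existence of the path $p_2$ from $u$ to a vertex $w \in \cL_0$ would force $u \in \cL^-_0$; this contradicts the assumption that $\cL^-$ is empty. Hence $u \in \cL_0$, and since $u$ was an arbitrary vertex of $p$, every vertex of $p$ lies in $\cL_0$. Because $\cL$ is full, the arrows of $p$ then also lie in $\cL$, so $\cL$ is convex. The case $\cL^+ = \emptyset$ is handled by the mirror argument, using $p_1$ instead of $p_2$.

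I do not anticipate a serious obstacle; the main point is simply to be careful about endpoints, since the definitions of $\cL^+$ and $\cL^-$ explicitly exclude vertices of $\cL_0$ itself. The intermediate vertex $u$ one wants to trap is allowed to equal $v$ or $w$, but in those cases $u \in \cL_0$ already and there is nothing to prove, so the argument is uniform.
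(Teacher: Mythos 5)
Your proof is correct and takes essentially the same route as the paper: the proposition there is presented as an immediate consequence of the preceding observation that $\cL$ is convex if and only if $\cL^+_0 \cap \cL^-_0 = \emptyset$, which is also the first half of your argument. The only difference is that you additionally unpack the ``easy to see'' remark by splitting the path $p$ at an arbitrary intermediate vertex $u$ and deriving the contradiction directly from the definitions of $\cL^+$ and $\cL^-$, which is sound and handles the endpoint cases correctly.
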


\section{Convexity and algebras} 

For the remainder of this paper, $\cL$ will denote a full subquiver of $\cQ$ and $I$ is an ideal
in $K\cQ$ contained in the ideal  generated
by the paths of length 2. We set $\Lambda = K\cQ/I$.  
Let $e_{\cL}$ be the
idempotent associated to the 
sum of the vertices in $\cL$ and $e'_{\cL}$ be the idempotent associated to the
sum of vertices not in $\cL$.  Then
$e_{\cL}$ and $\eclp$ are orthogonal idempotents in $K\cQ$ and  $1=\ecl+\eclp$.
We abuse notation and view $\ecl$ and $\eclp$ as idempotents in both $K\cQ$
and $\Lambda$.   The \emph{algebra associated to the pair $(\cL,\Lambda)$} is defined to
be $\Gamma=\Lambda/\langle \eclp\rangle$, where $\langle \eclp\rangle$ is
the ideal generated by $\eclp$.
  Another possible choice of the algebra associated to $(\cL,\Lambda)$ could have been
$\ecl\Lambda\ecl$.  In general, $ \Lambda/\langle \eclp\rangle$ and 
$\ecl\Lambda\ecl$ are not isomorphic.  For example, if $\cQ$ is the quiver
$v\to w\to x$, $\Lambda=K\cQ$, and $\cL$ is the subquiver generated by $\{v,x\}$, then
$\ecl\Lambda\ecl$ is the hereditary algebra with quiver $v\to x$ of dimension 3 while
$\Lambda/\langle \eclp\rangle$ is the semisimple algebra of dimension 2 corresponding to
the two vertices $v$ and $x$ with no arrows.  Before proving that when
$\cL$ is convex, the two algebras are in fact isomorphic, we present the following  useful
result.  We simplify notation by letting $e=\ecl$ and $e'=\eclp$ when no confusion can
arise.

\begin{lemma}\label{lem-prep}Suppose that $\cL$ is a convex subquiver
of $\cQ$.   Then if $\lambda,\gamma\in\Lambda$,
 $e\lambda e\gamma e
=e \lambda\gamma e$.  In particular,  $e\lambda e'\gamma e=0$.
\end{lemma}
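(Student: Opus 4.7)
The plan is to first reduce the identity $e\lambda e\gamma e = e\lambda\gamma e$ to the second assertion. Since $1 = e + e'$ in $\Lambda$, one can write
\[
e\lambda\gamma e \;=\; e\lambda(e+e')\gamma e \;=\; e\lambda e\gamma e + e\lambda e'\gamma e,
\]
so the two claims are equivalent: the first identity holds if and only if $e\lambda e'\gamma e = 0$. I would therefore focus entirely on proving $e\lambda e'\gamma e = 0$ for all $\lambda, \gamma \in \Lambda$.

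Next, I would lift $\lambda$ and $\gamma$ to elements of $K\cQ$ and show the stronger statement that $e x e' y e = 0$ already in $K\cQ$ for every $x, y \in K\cQ$; reducing mod $I$ then yields the claim in $\Lambda$. By $K$-linearity, it is enough to check this when $x$ and $y$ are paths. So let $p$ and $q$ be paths in $\cQ$, with $p$ going from a vertex $u$ to a vertex $v$, and $q$ from a vertex $v'$ to a vertex $w$. From the conventions (left multiplication by a vertex idempotent picks out paths starting at that vertex, right multiplication picks out paths ending there), the product $e p e' q e$ is nonzero only when $u \in \cL_0$, $v \notin \cL_0$, $v' \notin \cL_0$, $w \in \cL_0$, and $v = v'$ (so that the concatenation $pq$ is a nonzero path in $K\cQ$).

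The key step is to observe that under these conditions $pq$ is a path from $u \in \cL_0$ to $w \in \cL_0$ passing through the vertex $v \notin \cL_0$. But this contradicts the convexity hypothesis on $\cL$, which forbids exactly such paths. Hence no such pair $(p,q)$ can contribute, and $e p e' q e = 0$ in $K\cQ$. Reducing mod $I$ gives $e\lambda e'\gamma e = 0$ in $\Lambda$, and combining with the first paragraph completes the proof.

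There is no real obstacle here; the argument is essentially a bookkeeping exercise, and the only point requiring care is making sure the path composition conventions (recall $ab$ means ``first $a$ then $b$'') line up correctly with the action of the idempotents $e$ and $e'$ on the left and right.
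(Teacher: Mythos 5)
Your argument is essentially identical to the paper's: both insert $1 = e + e'$ to split $e\lambda\gamma e$ into $e\lambda e\gamma e + e\lambda e'\gamma e$, lift to $K\cQ$ and reduce by linearity to paths, and then invoke convexity of $\cL$ to conclude that $e p e' q e = 0$ for all paths $p,q$. The only difference is presentational (you first record the equivalence of the two claims before proving the vanishing), so there is nothing further to add.
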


\begin{proof} Let $\lambda$ and $\gamma$ be elements of $\Lambda$.  If $\pi\colon K\cQ\to \Lambda$
is the canonical surjection, then there are paths $p_i$ and $q_j$ and constants $c_i$ and $d_i$, so
that $\lambda=\pi(\sum_ic_ip_i)$ and $\gamma=\pi(\sum_jd_jq_j)$.
Then, using that $1=e+e'$, we have
\[
e\lambda\gamma e= e\lambda(e+e')\gamma e=\]\[
 \pi(e(\sum_i c_ip_i)e(\sum_jd_jq_j)e)+ \pi(e(\sum_i c_ip_i)e'(\sum_jd_jq_j)e).\]
Since $\cL$ is convex,  each $e p_ie' q_je =0$ and the result follows.

\end{proof}

\begin{cor}\label{cor-ringh} Keeping the hypothesis of Lemma \ref{lem-prep},
the homomorphism $\phi\colon \Lambda\to e \Lambda e$ defined by
$\phi(\lambda)=e\lambda e$ is a ring surjection.
\end{cor}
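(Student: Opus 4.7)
The plan is to verify the four defining properties of a (unital) ring homomorphism plus surjectivity, with the main work already done in Lemma \ref{lem-prep}.

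First I would note that $e\Lambda e$ is a ring whose multiplicative identity is $e$ itself (since for $e\mu e \in e\Lambda e$, we have $e\cdot e\mu e = e\mu e = e\mu e\cdot e$), so $\phi(1)=e\cdot 1\cdot e = e$ sends the identity of $\Lambda$ to the identity of $e\Lambda e$. Additivity $\phi(\lambda+\gamma)=e(\lambda+\gamma)e=e\lambda e+e\gamma e=\phi(\lambda)+\phi(\gamma)$ is immediate from distributivity in $\Lambda$.

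The only nontrivial axiom is multiplicativity: one needs
\[
\phi(\lambda\gamma)=e\lambda\gamma e \stackrel{?}{=} (e\lambda e)(e\gamma e) = \phi(\lambda)\phi(\gamma).
\]
But this is precisely the content of Lemma \ref{lem-prep}, which used convexity of $\cL$ to show $e\lambda e\gamma e=e\lambda\gamma e$. So multiplicativity follows directly with no additional computation.

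Finally, surjectivity is essentially tautological: an arbitrary element of $e\Lambda e$ has the form $e\mu e$ for some $\mu\in\Lambda$, and then $\phi(\mu)=e\mu e$ exhibits it as a value of $\phi$. The main (and only real) obstacle in the whole corollary is the multiplicativity step, and that obstacle was cleared by the preceding lemma; everything else is formal.
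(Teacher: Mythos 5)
Your proof is correct and takes the same route the paper intends: the corollary is presented as an immediate consequence of Lemma~\ref{lem-prep}, which supplies the only nontrivial axiom (multiplicativity), and you correctly observe that unitality (with $e$ as the identity of $e\Lambda e$), additivity, and surjectivity are all formal. The paper gives no explicit proof, and your write-up is precisely the routine verification being left to the reader.
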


The convexity of $\cL$ is essential in the last part of the next result.

\begin{prop}\label{prop-2def} Let $\cL$ be a full subquiver of $\cQ$ and
$\Lambda=K\cQ/I$ where $I\subseteq  J^2$.  The following three $K$-algebras are isomorphic.
\begin{enumerate}
\item $e\Lambda e$.
\item $e K\cQ e /e I e$
\item     $e K\cQ e /I\cap e K\cQ e $
\item
If $\cL$ is convex then 
 $\Lambda/\langle e'\rangle$,  the algebra associated to
 $(\cL,\Lambda)$, is isomorphic to $e\Lambda e$ This isomorphism sends $\bar \lambda$ to $e \lambda e$,  where
$\lambda\in\Lambda$ and $\bar \lambda$ denotes the image of $\lambda$ in
$\Lambda/\langle e' \rangle$.
\item If $\cL$ is convex, then $K\cL/(I\cap K\cL)$ is isomorphic
to $\Lambda/\langle e'\rangle$.
\end{enumerate}
\end{prop}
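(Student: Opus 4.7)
The plan is to handle parts (1)--(3), which do not require convexity, via the canonical surjection $\pi\colon K\cQ\to\Lambda$, and then to leverage convexity plus Corollary \ref{cor-ringh} for (4) and (5). First I would restrict $\pi$ to $eK\cQ e$; this should give a map onto $e\Lambda e$, since any $e\lambda e$ equals $\pi(eae)$ for a preimage $a$ of $\lambda$, with kernel $I\cap eK\cQ e$. This yields (1)$\cong$(3). For (2)$=$(3) I would just check directly: $eIe\subseteq I\cap eK\cQ e$ is immediate, and if $x\in I\cap eK\cQ e$ then $x=exe\in eIe$ because $e^2=e$.

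For (4), I would invoke Corollary \ref{cor-ringh} to promote $\phi\colon\Lambda\to e\Lambda e$, $\phi(\lambda)=e\lambda e$, to a ring surjection (this is where convexity enters, through Lemma \ref{lem-prep}). The remaining task is to identify $\ker\phi$ with $\langle e'\rangle$. One inclusion is clear since $ee'e=0$ and $\ker\phi$ is a two-sided ideal. For the other, if $e\lambda e=0$ then expanding $\lambda=(e+e')\lambda(e+e')$ gives $\lambda=e\lambda e'+e'\lambda e+e'\lambda e'\in\langle e'\rangle$. The first isomorphism theorem then delivers the stated isomorphism $\Lambda/\langle e'\rangle\to e\Lambda e$, sending $\bar\lambda$ to $e\lambda e$.

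For (5), the key point I would exploit is that convexity forces the equality $eK\cQ e=K\cL$: both have a $K$-basis consisting of paths in $\cQ$ with source and target in $\cL_0$, and by convexity any such path has all of its vertices and arrows in $\cL$. Consequently $I\cap eK\cQ e=I\cap K\cL$, and combining with (3) and (4) gives $K\cL/(I\cap K\cL)\cong e\Lambda e\cong\Lambda/\langle e'\rangle$. The only genuine obstacle is (4), and it has already been resolved by Corollary \ref{cor-ringh}; once $\phi$ is known to be multiplicative, both (4) and (5) reduce to routine bookkeeping, and the essential role of convexity is to ensure that $eK\cQ e$ captures the full relational structure of $K\cL$ rather than some proper quotient or extension of it.
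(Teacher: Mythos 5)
Your argument is correct and follows essentially the same route as the paper: for part (4) you reproduce the paper's proof verbatim, invoking Corollary~\ref{cor-ringh} to get the ring surjection $\phi$ and then expanding $\lambda=(e+e')\lambda(e+e')$ to identify $\ker\phi=\langle e'\rangle$. The paper leaves (1)--(3) and (5) to the reader, and you fill them in with the expected bookkeeping, in particular the observation that convexity plus fullness forces $eK\cQ e=K\cL$ inside $K\cQ$, which is exactly the intended argument for (5).
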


\begin{proof} The proofs
of parts (1)-(3) are left to the reader.  By Corollary \ref{cor-ringh}, the map
$\lambda$ to $e\lambda e$ is a ring surjection.  We claim $\langle e'\rangle$ is
the kernel of this homomorphism.   Since $ee'e=0$, $\langle e'\rangle$ is contained
in the kernel.  Now suppose that $\lambda$ is in the kernel; that is, $e\lambda e=0$.
Now $\lambda=e\lambda e +e'\lambda e +e\lambda e' +e'\lambda e'$.  Since
$e'\lambda e +e\lambda e' +e'\lambda e'\in \langle e'\rangle$, we are done.
\end{proof}

Consider the map $\mu\colon \ecl\Lambda\ecl\to\Lambda$ given by inclusion
and the map  $\nu\colon \Lambda \to  \ecl\Lambda\ecl$ given by $\nu(\lambda)=\ecl \lambda \ecl$. If $\cL$ is convex,  then $\nu\circ \mu = 1_{ \ecl\Lambda\ecl}$.  In fact we have the following result.

\begin{prop}\label{prop-split} Let $\cL$ be a convex subquiver of $\cQ$
 and let $\pi\colon\Lambda
\to \Gamma=\Lambda/\langle e'\rangle$ 
denote the canonical surjection.  Then there is a ring (without 1) splitting
of
$\phi$
mapping $ \Lambda/\langle e'\rangle$ to $\Lambda$ 
 given as follows: if $\lambda\in\Lambda$ then $\lambda +\langle e'\rangle$ is sent to
$e \lambda e$.
\end{prop}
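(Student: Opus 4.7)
The plan is to verify, in sequence, three assertions about the proposed map $s\colon \Gamma\to\Lambda$ defined by $s(\lambda+\langle e'\rangle)=e\lambda e$: (i) it is well-defined, (ii) it is a (non-unital) ring homomorphism, and (iii) $\pi\circ s=\mathrm{id}_{\Gamma}$. All three reduce to ingredients already assembled in the excerpt.

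For well-definedness, I would observe that Proposition \ref{prop-2def}(4) (whose proof is carried out in the paragraph above) already identifies $\langle e'\rangle$ with the kernel of the map $\lambda\mapsto e\lambda e$. So if $\lambda-\mu\in\langle e'\rangle$ then $e\lambda e=e\mu e$, and $s$ is unambiguously defined on the quotient. Additivity of $s$ is immediate from $K$-linearity of left and right multiplication by $e$. For multiplicativity, the required identity is
\[
s(\bar\lambda)\,s(\bar\gamma)=(e\lambda e)(e\gamma e)=e\lambda\gamma e=s(\overline{\lambda\gamma})=s(\bar\lambda\,\bar\gamma),
\]
which is exactly the conclusion of Lemma \ref{lem-prep}; this is the one place where the convexity of $\cL$ is used. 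Note that $s(\bar 1)=e\neq 1$ in general, so $s$ is a ring map only in the non-unital sense, as asserted.

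For the splitting property, I would write any $\lambda\in\Lambda$ using the orthogonal decomposition $1=e+e'$:
\[
\lambda=e\lambda e+e'\lambda e+e\lambda e'+e'\lambda e'.
\]
The last three summands all lie in $\langle e'\rangle$, so $\lambda\equiv e\lambda e\pmod{\langle e'\rangle}$, giving $\pi(s(\bar\lambda))=\pi(e\lambda e)=\bar\lambda$, as desired.

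There is essentially no obstacle: the substantive content is Lemma \ref{lem-prep} together with the kernel computation from Proposition \ref{prop-2def}, and the proof is a short bookkeeping argument combining those two facts. The only subtlety worth flagging explicitly is the failure of unitality, which is why the statement reads ``ring (without 1) splitting''.
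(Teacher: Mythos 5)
Your proof is correct and takes essentially the same route as the paper: the paper packages the map as $\mathrm{incl}\circ\delta$ where $\delta$ is the isomorphism from Proposition \ref{prop-2def}, while you unfold that composite directly and explicitly verify well-definedness (via the kernel computation), multiplicativity (via Lemma \ref{lem-prep}), and the splitting identity (via the decomposition $\lambda=e\lambda e+e'\lambda e+e\lambda e'+e'\lambda e'$). The paper leaves the latter two checks to the reader; you carry them out, but the underlying ingredients and structure are the same.
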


\begin{proof} Let $\delta\colon
\Lambda/\langle e'\rangle\to e\Lambda e$ be the isomorphism described in
Proposition  \ref{prop-2def}.  Then 
$incl\circ\delta\colon \Lambda/\langle e'\rangle\to \Lambda$
is easily seen to  splitting  to the canonical surjection $\psi\colon\Lambda\to\Lambda/\langle e'\rangle$.   The reader may check that $incl\circ\delta$ is a ring homomorphism
taking $1\in  \Lambda/\langle e'\rangle$ to $e\in\Lambda$.
\end{proof}

\section{Module theoretic results}\label{sect-prelim}

Recall that a ring epimorphism $A\to B$ is called a \emph{homological
epimorphism} if the induced map $\cD^b(\text{mod}(B))\to
\cD^b(\text{mod}(A))$ is fully faithful \cite{GL,K}.  Equivalently,
if $A\to B$ is a homological epimorphism and $M,N\in \text{mod}(B)$,
then $\Ext_B^n(M,N)$ is isomorphic to $\Ext_A^n(M,N)$
for all $n\ge 0$. Consequently, if $A$ is a finite dimensional algebra and
$A\to B$ is a homological epimorphism, then $\gldim(A)\ge \gldim(B)$ and
$\findim(A)\ge\findim(B)	$.																			

In this section we show that the canonical surjection $\Lambda\to\Gamma $ is 
a homological epimorphism if $\cL$ is convex subquiver of $\cQ$.  We begin
by assuming $\cL^+$ is empty.
As examples,  if $\cL$ is a full subquiver of
$\cQ$, then both $(\cL\cup \cL^+)^+ $ and $(\cL\cup \cL^-)^-$ are empty 
and hence $\cL\cup\cL^+$ and $\cL\cup\cL^-$ are convex.
Another example would be $\cL^+$ which is always convex and $(\cL^+)^+$ is always empty.

We will apply the following result.

\begin{prop}\label{prop-epi} (1) If $\cL^+$ is empty, then
the canonical surjection $\Lambda\to \Gamma$ is a homological
epimorphism.

(2).  If $\cQ=\cL\cup\cL^+$,  then $\Gamma$ is a projective
left $\Lambda$-module and
the canonical surjection $\Lambda\to \Gamma$ is a homological
epimorphism
\end{prop}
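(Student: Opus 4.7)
The plan is to exploit the idempotent decomposition $1 = e + e'$, with $e = \ecl$ and $e' = \eclp$, and to observe that each hypothesis forces one of the ``cross corners'' $e\Lambda e'$ or $e'\Lambda e$ to vanish.  This makes $\Gamma = \Lambda/\langle e'\rangle$ isomorphic to a one-sided direct summand of $\Lambda$ (right for (1), left for (2)), from which all the assertions of the proposition follow routinely.

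For part~(1), $\cL^+ = \emptyset$ says that no vertex outside $\cL$ is the terminus of a path originating in $\cL$, which in $\Lambda$ translates to $e\Lambda e' = 0$.  From this I would deduce $\langle e'\rangle = e'\Lambda$: a generator $xe'y$ of the two-sided ideal satisfies $xe' = exe' + e'xe' = e'xe' \in e'\Lambda$, since $exe' \in e\Lambda e' = 0$.  Hence $\Gamma \cong \Lambda/e'\Lambda \cong e\Lambda$ as a right $\Lambda$-module, so $\Gamma$ is projective on the right.  The homological epimorphism property is then immediate from the standard criterion that a surjective ring map $\Lambda \to \Gamma$ is a homological epimorphism iff $\Gamma \otimes_\Lambda^{L} \Gamma \simeq \Gamma$ in the derived category: right-projectivity of $\Gamma$ kills $\Tor_i^\Lambda(\Gamma, \Gamma)$ for $i \geq 1$, and the ordinary tensor $\Gamma \otimes_\Lambda \Gamma = \Gamma/\Gamma\langle e'\rangle$ equals $\Gamma$ because $e'$ acts as zero on $\Gamma$.

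For part~(2), the hypothesis $\cQ = \cL \cup \cL^+$, combined with the standing assumption of this section that $\cL$ is convex, forces $\cL^- = \emptyset$: any vertex in $\cL^-$ lies outside $\cL$ and thus, by the hypothesis, also in $\cL^+$, contradicting $\cL^+_0 \cap \cL^-_0 = \emptyset$ (which holds by convexity).  Now no path starts outside $\cL$ and ends in $\cL$, so $e'\Lambda e = 0$, and a mirror image of the part~(1) argument yields $\langle e'\rangle = \Lambda e'$ and hence $\Gamma \cong \Lambda e$ as a left $\Lambda$-module, which is the asserted left-projectivity.  The homological epimorphism then follows from the same derived-tensor criterion, with left-projectivity of $\Gamma$ now supplying the $\Tor$ vanishing.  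The substantive content of the proof lies entirely in the vanishing of the appropriate cross corner, which is a direct combinatorial consequence of the subquiver hypothesis; I don't foresee any deeper obstacle.
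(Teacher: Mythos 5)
Your proof is correct, and it takes a genuinely different, more self-contained route than the paper's. For part~(1), where you show $\langle e'\rangle = e'\Lambda$ and hence $\Gamma \cong e\Lambda$ is right projective, then verify $\Gamma\otimes^{L}_{\Lambda}\Gamma\simeq\Gamma$ by hand, the paper instead writes $\Lambda$ as the lower-triangular matrix ring $\left(\begin{smallmatrix}\Gamma&0\\ e'\Lambda e& e'\Lambda e'\end{smallmatrix}\right)$ and cites Fossum--Griffiths--Reiten and Palmer--Roos. For part~(2), you deduce $\cL^-=\emptyset$ (correctly using convexity to rule out $\cL^+\cap\cL^-\neq\emptyset$), hence $\langle e'\rangle=\Lambda e'$ and $\Gamma\cong\Lambda e$ is left projective; the paper establishes left projectivity differently, by exhibiting the splitting $\psi(\lambda+\langle e'\rangle)=e\lambda e$ from Proposition~\ref{prop-split} and checking it is a left-module map, and then cites Auslander--Platzeck--Todorov for the homological-epimorphism conclusion. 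Your argument buys self-containment: you replace both citations with the Geigle--Lenzing criterion $\Gamma\otimes^{L}_{\Lambda}\Gamma\simeq\Gamma$, settled immediately by the one-sided projectivity you establish plus the trivial computation $\Gamma\otimes_{\Lambda}\Gamma=\Gamma$. Your route to left projectivity in~(2) (identifying $\langle e'\rangle=\Lambda e'$ directly) is also somewhat cleaner than the paper's detour through the ring-splitting of Proposition~\ref{prop-split}. One small remark: you correctly noted that convexity of $\cL$ is needed in~(2) to get $e'\Lambda e=0$; this hypothesis is implicit in the paper's statement but is genuinely used, as you observed.
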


\begin{proof}
First assume that $\cL^+$ is empty.  Then $e\Lambda e'=0$
and hence
 $\Lambda =\left(\begin{array}{cc}\Gamma & 0\\e'\Lambda e&e'\Lambda e'\end{array}
\right)$. Part (1) follows from \cite{FGR, PR}.

Next , assume that $\cQ=\cL\cup\cL^+$.  We claim that
$\Gamma$ is a left projective $\Lambda$-module. 
Note  that $e'\Lambda e=(0)$ since there are no paths from vertices in $\cL^+$ to
$\cL$ by  convexity of $\cL$.   By Proposition \ref{prop-split}
there is splitting $\psi\colon \Gamma\to\Lambda$ of the canonical
surjection $\Lambda\to\Gamma$ given by $\psi(\lambda+\langle e'\rangle)=e\lambda e$.  Using that $e'\Lambda e=0$,
one sees that 
\[\lambda\psi(\lambda')=\lambda(e\lambda' e)=e\lambda e\lambda'e=
e\lambda\lambda' e=\psi(\lambda\lambda'+\langle e'\rangle).\]
Hence $\psi$ is a left $\Lambda$-module homomorphism. 
We conclude that $\Gamma$ is a left projective $\Lambda$-module.
Part (2) follows from \cite{APT}.

\end{proof}

Using the previous proposition, we prove the main result.

\begin{thm}\label{thm-convex-ext} Let $K$ be a field, $\cQ$ a finite quiver, and $\Lambda=K\cQ/I$, where
$I$ is an ideal in $K\cQ$ contained in the ideal generated by the paths of length 2 in $K\cQ$.  
Suppose that   $\cL$ is a convex subquiver
of $\cQ$ and let $\Gamma$ be the algebra associated to $(\cL,\Lambda)$.
Then the canonical surjection $\Lambda\to\Gamma$ is a
homological epimorphism.

\end{thm}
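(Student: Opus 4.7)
The plan is to factor the canonical surjection $\Lambda\to\Gamma$ through an intermediate algebra corresponding to the convex subquiver $\cM=\cL\cup\cL^+$, and show that each of the two stages is a homological epimorphism by direct application of Proposition \ref{prop-epi}. Since homological epimorphisms compose (the derived inclusion functors are fully faithful, and a composition of fully faithful functors is fully faithful; ring epimorphisms likewise compose to ring epimorphisms), the theorem will then follow.

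First I would record that $\cM=\cL\cup\cL^+$ is a convex subquiver of $\cQ$. This is already noted in the paragraph preceding Proposition \ref{prop-epi}, but it is the key geometric input, so I would justify it using Proposition \ref{prop-empty}: any vertex lying outside $\cM$ but reachable from $\cM$ would either be reached directly from $\cL$ (and hence lie in $\cL^+\subseteq\cM$, contradiction) or be reached from some $w\in\cL^+$, in which case concatenating with the path from $\cL$ to $w$ produces a path from $\cL$ to the vertex, again placing it in $\cL^+$. Thus $\cM^+$ is empty and Proposition \ref{prop-empty} gives the convexity of $\cM$.

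With convexity in hand, Proposition \ref{prop-2def}(5) identifies $\Lambda/\langle e_{\cL^-}+e_{\cL^o}\rangle$ with $\Lambda':=K\cM/(I\cap K\cM)$, and Proposition \ref{prop-epi}(1), applied to $\cM$ as the convex subquiver of $\cQ$, shows that the first surjection $\Lambda\to\Lambda'$ is a homological epimorphism. For the second stage I would regard $\Lambda'$ as the basic algebra living over the ambient quiver $\cM$: inside $\cM$, the subquiver $\cL$ is still convex, and since the vertices of $\cM$ not in $\cL$ are exactly those of $\cL^+$ (which remains the same set computed relative to $\cM$), we have $\cM=\cL\cup\cL^+$. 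This is precisely the hypothesis of Proposition \ref{prop-epi}(2), which gives that the second surjection $\Lambda'\to\Gamma$ is a homological epimorphism. Composing the two stages yields the theorem.

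The main thing to be careful about is the switch of ambient quivers between the two applications of Proposition \ref{prop-epi}. One must check that the decomposition and convexity properties of $\cL$ computed relative to $\cQ$ survive when we restrict to the smaller quiver $\cM$, so that Proposition \ref{prop-epi}(2) can be invoked in its stated form. Once this bookkeeping is done, the rest of the argument is essentially formal: the identification in Proposition \ref{prop-2def} bridges the algebraic and quiver-theoretic descriptions of the quotients, and the two pieces of Proposition \ref{prop-epi} handle the two stages of killing idempotents.
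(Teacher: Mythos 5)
Your proposal is correct and takes essentially the same approach as the paper: factor $\Lambda\to\Gamma$ through the intermediate algebra attached to $\cM=\cL\cup\cL^+$, apply Proposition~\ref{prop-epi}(1) to $\Lambda\to\Delta$ (your $\Lambda'$) and Proposition~\ref{prop-epi}(2) to $\Delta\to\Gamma$, and compose. Your extra care in verifying $\cM^+=\emptyset$ and in checking that convexity and the decomposition $\cM=\cL\cup\cL^+$ persist when the ambient quiver shrinks from $\cQ$ to $\cM$ is exactly the ``straightforward'' bookkeeping the paper leaves implicit.
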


\begin{proof}

We begin by applying Proposition \ref{prop-epi} as follows.  Let $\cM=\cL\cup\cL^+$.
We see that $\cM^+$ is empty.  Let $\Delta$ be the algebra associated to $(\cM,\Lambda)$.  Proposition \ref{prop-epi}(1) yields
the canonical surjection $\Lambda\to \Delta$ is a homological
epimorphism.

Next we assume that $\cL$ is convex. View $\cL$ as a subquiver of $\cM$ and let $\Sigma$ be the algebra associated
to $(\Delta,\cL)$. 
We apply Proposition \ref{prop-epi}(2) the canonical surjection
$\Delta\to\Sigma$ is a homological epimorphsim.

 It is straightforward to show
$\Sigma$ is isomorphic to $\Gamma$ and the composition of
homological epimorphisms is a homological
epimorphism. 
\end{proof}

\begin{cor}\label{cor-epi} Suppose that $\Lambda$ is a finite dimensional $K$-algebra.
If $\cL$ is a convex subquiver of
$\cQ$, then
\begin{enumerate}\item $\gldim(\Lambda)\ge\gldim (\Gamma)$
\item $\findim(\Lambda)\ge\findim(\Gamma)$.
\end{enumerate}
\end{cor}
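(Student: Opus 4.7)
The plan is to deduce both inequalities directly from Theorem \ref{thm-convex-ext} together with the Ext-consequences of a homological epimorphism recorded at the start of Section \ref{sect-prelim}. Theorem \ref{thm-convex-ext} supplies that $\pi\colon\Lambda\to\Gamma$ is a homological epimorphism, so that $\Ext^n_\Lambda(M,N)\cong \Ext^n_\Gamma(M,N)$ for all finitely generated $\Gamma$-modules $M,N$ and all $n\ge 0$, where $\Gamma$-modules are regarded as $\Lambda$-modules by restriction along $\pi$.

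For part (1), I would argue as follows. Fix any $n\le\gldim(\Gamma)$ and choose $\Gamma$-modules $M,N$ with $\Ext^n_\Gamma(M,N)\ne 0$. The Ext-isomorphism forces $\Ext^n_\Lambda(M,N)\ne 0$, so $\pd_\Lambda M\ge n$ and hence $\gldim(\Lambda)\ge n$. Letting $n\to\gldim(\Gamma)$ yields $\gldim(\Lambda)\ge\gldim(\Gamma)$.

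For part (2), the same Ext-comparison shows that for any $\Gamma$-module $M$ one has $\pd_\Lambda M\ge \pd_\Gamma M$. The subtle point is compatibility with the definition of finitistic dimension, which only counts modules of \emph{finite} projective dimension: a $\Gamma$-module $M$ with $\pd_\Gamma M = d < \infty$ contributes $d$ to $\findim(\Gamma)$, and we need $M$ to contribute at least $d$ to $\findim(\Lambda)$, which requires $\pd_\Lambda M < \infty$ as well. This is the main obstacle. It is handled by the standard strengthening of the Ext-comparison for homological epimorphisms: for a $\Gamma$-module $M$ one in fact has $\pd_\Lambda M = \pd_\Gamma M$. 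Equivalently, higher $\Ext^n_\Lambda(M,-)$ vanishes on \emph{all} $\Lambda$-modules (not merely on $\Gamma$-modules), which is a consequence of the derived identity $\Gamma\otimes^L_\Lambda\Gamma\cong\Gamma$ characterizing homological epimorphisms and allows a bounded $\Gamma$-projective resolution of $M$ to be promoted to a bounded $\Lambda$-resolution of $M$ without incurring higher Tor-obstructions.

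Granted this finiteness transfer — which is exactly the content of the ``Consequently'' sentence at the beginning of Section \ref{sect-prelim} — every $\Gamma$-module $M$ contributing $d$ to $\findim(\Gamma)$ contributes the same value $d$ to $\findim(\Lambda)$, so $\findim(\Lambda)\ge\findim(\Gamma)$, which is (2).
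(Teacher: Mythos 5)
Part (1) is correct and is exactly the paper's intended argument: Theorem \ref{thm-convex-ext} gives that $\Lambda\to\Gamma$ is a homological epimorphism, the Ext-comparison for $\Gamma$-modules forces $\Ext^n_\Lambda(M,N)\ne 0$ whenever $\Ext^n_\Gamma(M,N)\ne 0$, and $\gldim(\Lambda)\ge\gldim(\Gamma)$ follows. (The paper itself treats both (1) and (2) as immediate from the ``Consequently'' sentence opening Section~\ref{sect-prelim} and gives no further proof.)

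Part (2), however, contains a genuine error. The asserted equality $\pd_\Lambda M=\pd_\Gamma M$ for a $\Gamma$-module $M$ is \emph{false}, and your stronger claim that $\Ext^n_\Lambda(M,-)$ vanishes on \emph{all} $\Lambda$-modules once $n>\pd_\Gamma M$ is also false: the Ext-comparison coming from a homological epimorphism only controls $\Ext^n_\Lambda(M,N)$ when \emph{both} $M$ and $N$ are $\Gamma$-modules. A concrete counterexample inside the paper's setting: let $\cQ\colon 1\xrightarrow{a}2\xrightarrow{b}3$, $I=\langle ab\rangle$, $\Lambda=K\cQ/I$, and $\cL=\{1,2\}$ (convex since $\cL^-$ is empty). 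Then $\Gamma\cong K(1\to2)$, so $\pd_\Gamma S_1=1$, while $\pd_\Lambda S_1=2$, witnessed by $\Ext^2_\Lambda(S_1,S_3)\ne 0$ with $S_3$ a $\Lambda$-module that is not a $\Gamma$-module. So the Ext-comparison gives the (correct) inequality $\pd_\Lambda M\ge\pd_\Gamma M$, but never an equality, and $\Gamma\otimes^L_\Lambda\Gamma\cong\Gamma$ does not by itself promote a bounded $\Gamma$-resolution to a bounded $\Lambda$-resolution.

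The difficulty you correctly isolated --- that one must ensure $\pd_\Gamma M<\infty$ implies $\pd_\Lambda M<\infty$ --- is real, and it is not a formal consequence of the homological-epimorphism property; it needs extra input. In this paper that input is the triangular structure used to \emph{prove} Theorem \ref{thm-convex-ext}: after passing to $\cM=\cL\cup\cL^+$ one has $e\Lambda e'=0$, so $\Lambda$ is an upper/lower triangular matrix ring with $\Gamma$ in a corner, and the Fossum--Griffiths--Reiten/Palmer--Roos bounds (the source of Theorem \ref{thm-fgr}) give $\findim(\Lambda)\ge\findim(\Gamma)$ directly, without any claim that individual projective dimensions are preserved. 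A repair of your argument should route part (2) through that triangular decomposition rather than through a false equality of projective dimensions.
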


\section{The homological heart}\label{homo heart}

In this section we introduce the homological heart of
a quiver.
Let 
\[X=\{v\in\cQ_0 \mid v\text{ is a vertex in a nontrivial cycle in }\cQ\}\] and  let 
\[Y=X\cup \{y\in\cQ_0\mid y \text{ is a vertex in a path with origin}\]
\[ \text{ and terminus vertices in }X\}.\]
Let $\cH(\cQ)$, or simply  $\cH$ when no confusion can arise, be the subquiver
of $\cQ$ with vertex set $Y$.  We call $\cH$ the \emph{homological heart of $\cQ$}.  

The proof of the next result is left to the reader.

\begin{prop}\label{prop-hh1}  Let $\cH$ be the homological heart of $\cQ$.
Then the following statements hold.
\begin{enumerate}
\item The subquiver $\cH$ is the empty quiver if and only if $\cQ$ contains no nontrivial
cycles; that is, $\cQ$ is triangular.

\item The subquiver $\cH$ is the smallest convex subquiver of $\cQ$ that contains all
the nontrivial path connected components of  $\cQ$.
\item The homological heart of $\cQ$ is an invariant of $\cQ$.
\item The subquiver $\cH^+\cup\cH^-\cup\cH^o$ contains no oriented cycles.
\end{enumerate}
\end{prop}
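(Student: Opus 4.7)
The plan is to handle the four claims in order, working directly from the defining sets $X$ (vertices on nontrivial cycles) and $Y$ (vertices on paths whose endpoints lie in $X$), and noting that $X \subseteq Y = \cH_0$.

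For (1), I would just unwind the definitions. If $\cQ$ has no nontrivial cycles, then $X = \emptyset$, so the condition defining $Y\setminus X$ is vacuous and $Y = \emptyset$. Conversely, if $\cH$ is empty, then $X \subseteq Y = \emptyset$, so no vertex lies on a nontrivial cycle, i.e.\ $\cQ$ is triangular.

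For (2), the first observation is that the nontrivial path connected components of $\cQ$ have vertex set exactly $X$: if $u \neq v$ are path connected via paths $u\to v$ and $v\to u$, their composition is a nontrivial cycle at $u$, so $u \in X$; and conversely every $v \in X$ sits on a nontrivial cycle and hence in a nontrivial component. So the claim becomes that $\cH$ is the smallest convex subquiver containing $X$. Convexity of $\cH$ is the key step: given $v,w \in Y$ and a path $p$ from $v$ to $w$, I choose $x_1,x_2,x_3,x_4 \in X$ with paths $x_1 \rightsquigarrow v$, $w \rightsquigarrow x_4$; concatenating yields a path from $x_1 \in X$ to $x_4 \in X$ passing through every vertex of $p$, placing each such vertex in $Y$. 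Minimality is then immediate: any convex $\cL$ containing $X$ must, by convexity applied to paths between vertices of $X$, contain every vertex of $Y$.

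For (3), I would simply remark that both $X$ and $Y$ are defined purely in terms of the combinatorics of $\cQ$ (cycles and paths between them), hence $\cH$ is determined by $\cQ$ alone and is preserved by any quiver isomorphism. For (4), suppose for contradiction that $\cH^+ \cup \cH^- \cup \cH^o$ contains a nontrivial oriented cycle $c$. Every vertex appearing in $c$ lies on a nontrivial cycle of $\cQ$, hence belongs to $X \subseteq Y = \cH_0$; but by construction no vertex of $\cH^+ \cup \cH^- \cup \cH^o$ lies in $\cH_0$, a contradiction. The only mildly subtle point is the convexity argument in (2), and even there the work is just chaining three paths together, so no serious obstacle is expected.
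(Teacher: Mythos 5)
The paper explicitly leaves this proposition's proof to the reader, so there is no authorial argument to compare against; your proof is correct and is essentially the natural unwinding of the definitions of $X$, $Y$, and convexity. The one step worth flagging as genuinely load-bearing is your identification of $X$ with the union of the vertex sets of the nontrivial path connected components (needed for part (2)), and your chaining argument $x_1 \rightsquigarrow v \rightsquigarrow w \rightsquigarrow x_4$ for convexity of $\cH$; both are handled correctly, including the degenerate cases where $v$ or $w$ already lies in $X$ (take the trivial path).
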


Throughout this section,
we let $\Gamma$ denote the algebra associated to 
 $(\cH,\Lambda)$. The goal of this section is to show that  $\Gamma$ provides information about
a number homological features of $\Lambda$ .

We fix the following notation. 
For each vertex $v\in\cQ$, let $P_v$ denote the projective $\Lambda$-module $v\Lambda$ and
$I_v $ the injective $\Lambda$-envelope of the simple one dimensional $\Lambda$-module associated
to the vertex $v$.
 
 We begin with a known result 
which we include for completeness.  

\begin{prop}\label{prop-res-path}
(1). Let $P^n\to P^{n-1}\to\cdots\to P^2\to P^1\to P^0$
be a sequence of nonzero $\Lambda$-homorphisms with each $P^i$ a projective
$\Lambda$-module which is a direct sum of projective modules of the form $P_v$.  Assume that
 the image of $P^i\to  P^{i-1}$ is contained 
$P^{i-1}\br$, where $\br$ is the Jacobson radical of $\Lambda$ and that $\ker(P^n\to P^{n-1})$ is contained in $P^n\br$, for
all $1\le i\le n$.  
If $v\in\cQ_0$ is
such that  $v\Lambda $ is a summand of $P^n$, then there exist a vertex $w$ and a 
path of length $\ge n$ from $w$ to $v$ in $\cQ$ such that  $w\Lambda$ is summand of  $ P^0$.

(2) Let $I_0\to I_{1}\to\cdots\to I_{n-1}\to I_n$
be a sequence of nonzero $\Lambda$-homorphisms with each $I_i$ an injective
$\Lambda$-module which is a direct sum of injective modules of the form $I_v$.  Assume that
 $\soc(I_i)$ is contained in the image of $I_{i-1}\to  I_{i}$  and
 that $\soc(I_{i-1})$ is contained in the kernel of $I_{i-1}\to I_i$, for $1\le i\le n$.
If $v\in\cQ_0$ is
such that  $I_v $ is a summand of $I_n$, then there exist a vertex $w$ and a 
path of length $\ge n$ from $v$ in $\cQ$ to $w$ such that  $I_w\Lambda$ is summand of  $ I_0$
\end{prop}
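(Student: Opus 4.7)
My plan is to establish (1) by iteratively constructing a descending sequence of vertices $v = v_n, v_{n-1}, \ldots, v_1, v_0$ with each $v_i\Lambda$ a summand of $P^i$, together with paths $q_i$ in $\cQ$ of length $\ge 1$ from $v_{i-1}$ to $v_i$. Concatenating then gives $q_1 q_2 \cdots q_n$, a path of length $\ge n$ from $w := v_0$ to $v$, with $w\Lambda$ a summand of $P^0$, which is exactly what (1) demands. Part (2) will follow by the dual construction, once Hom-spaces between indecomposable injectives over the basic algebra $\Lambda$ are identified with the appropriate piece of $\Lambda$.

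For the inductive step of (1): let $d_i\colon P^i \to P^{i-1}$ denote the given map and suppose $v_i\Lambda$ has already been chosen as a summand of $P^i$. Decompose $P^{i-1} = \bigoplus_k u_k\Lambda$. Because $v_i$ generates a direct summand, $v_i \notin P^i\br$, and the kernel hypothesis then forces $d_i(v_i) \ne 0$. Since $v_i$ is idempotent, $d_i(v_i) = d_i(v_i) v_i$, and combining with $\Im d_i \subseteq P^{i-1}\br$ yields $d_i(v_i) \in \bigoplus_k u_k\br v_i$. Each summand $u_k \br v_i$ is the image in $\Lambda$ of the $K$-span of those paths in $\cQ$ of length $\ge 1$ that go from $u_k$ to $v_i$. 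Some component $x_k$ is nonzero; any lift of $x_k$ to an element of $u_k K\cQ v_i$ must avoid $I$ and hence exhibits at least one quiver path $p_\alpha$ with nonzero coefficient. I set $v_{i-1} := u_k$ and $q_i := p_\alpha$ and iterate from $i = n$ down to $i = 1$.

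For (2) the plan is dual. I use the standard identification $\Hom_\Lambda(I_u, I_v) \cong e_v\Lambda e_u$ (spanned by the classes of paths from $v$ to $u$), under which the radical of Hom corresponds to $e_v \br e_u$ and therefore to paths of length $\ge 1$; the reason is that a morphism of indecomposable injectives is in the Hom-radical iff it kills the socle. Given $v_i$ with $I_{v_i}$ a summand of $I_i$, I compose $f_i\colon I_{i-1}\to I_i$ with the projection $I_i \twoheadrightarrow I_{v_i}$. The socle-image hypothesis makes this composite hit $\soc I_{v_i} = S_{v_i}$, so it is nonzero; the socle-kernel hypothesis makes it vanish on $\soc I_{i-1}$. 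Writing $I_{i-1} = \bigoplus_k I_{u_k}$, each restriction $I_{u_k} \to I_{v_i}$ vanishes on $S_{u_k}$ and hence corresponds to an element of $e_{v_i}\br e_{u_k}$. Choosing a summand $u_k$ on which the restriction is nonzero and applying the same $K\cQ$-lifting step selects a path $q_i$ of length $\ge 1$ from $v_i$ to $v_{i-1} := u_k$; iterating produces $w := v_0$ and a path of length $\ge n$ from $v$ to $w$.

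The step I expect to dominate the work is the lifting argument: extracting a single quiver path with nonzero coefficient from a $\Lambda$-element (or Hom-element) known only to be nonzero modulo $I$. This rests on the observation that $u\Lambda v$ and $e_v\Lambda e_u$ are precisely the $\Lambda$-images of the path-spans $u K\cQ v$ and $e_v K\cQ e_u$, so any nonzero class has a lift in which some path-coefficient is nonzero. Once this and the Hom-description for indecomposable injectives are in hand, the rest of the proof is straightforward bookkeeping on the source and target vertices of the chosen paths.
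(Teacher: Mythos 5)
Your proof is correct and follows essentially the same line as the paper's sketch: induct along the sequence, using at each step that a nonzero map of indecomposable projectives landing in the radical (dually, a nonzero map of indecomposable injectives killing the socle) corresponds, under $\Hom_\Lambda(P_{v}, P_{w}) \cong w\Lambda v$ (resp.\ $\Hom_\Lambda(I_{u}, I_{w}) \cong e_{w}\Lambda e_{u}$), to a nonzero class in $w\br v$, which lifts to a $K\cQ$-element exhibiting at least one path of length $\ge 1$; concatenation then yields the path of length $\ge n$. One small remark: your induction uses $\ker(P^{i}\to P^{i-1})\subseteq P^{i}\br$ at every $i$, not just at $i=n$ as the displayed hypothesis literally states; this is the reading the argument requires (one can cook up a two-step sequence on a disconnected quiver where only the top kernel is radical and the conclusion fails), and it is exactly what holds in the paper's applications to minimal (co)resolutions, so it is the intended hypothesis.
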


\begin{proof}(Sketch) We only sketch a proof of (1) leaving (2) to
the reader.
 The proof follows by induction and noting that if $v$ is in the support of the $\Lambda$-module $ P_w\br$, then there is a  path from $w$ to $v$ of length at least $1$ and
if $f\colon P_v \to P_w\br$ is a nonzero map, then $v$ is in the support of $P_w\br$. 
\end{proof}
By Proposition \ref{prop-hh1}(4), we set $t$ to be the longest path in $\cQ$ with support
in $\cH^-\cup\cH^0\cup\cH^+$.  The next result will be used often.

\begin{lemma}\label{lem-dist} Let $M$ be a $\Lambda$-module.
Then, for $\ell> t$ \begin{enumerate}
\item the $\ell$-th syzygy of a $\Lambda$-module has support in
$\cH\cup\cH^+$ and 
\item the $\ell$-th cosyzygy of $\Lambda$-module has
support in $\cH\cup\cH^-$.

\end{enumerate}
\end{lemma}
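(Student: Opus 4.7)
The plan is to isolate the following elementary observation about paths in $\cQ$: by Proposition \ref{prop-hh1}(4) the full subquiver $\cH^+\cup\cH^-\cup\cH^o$ is acyclic, so by the choice of $t$ every path in $\cQ$ of length $>t$ must contain at least one vertex of $\cH$. Combined with the convexity of $\cH$ and the very definitions of $\cH^+$ and $\cH^-$, this observation reduces both parts of the lemma to Proposition \ref{prop-res-path}.

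For part (1), I will take a minimal projective resolution $\cdots\to P^\ell\to P^{\ell-1}\to\cdots\to P^0\to M\to 0$ and write $N$ for the $\ell$-th syzygy. Since $N$ is generated by the top of $P^\ell$, any vertex $v$ in the support of $N$ is reachable in $\cQ$ by a path from some vertex $u$ for which $P_u$ is a summand of $P^\ell$. Proposition \ref{prop-res-path}(1) then supplies a path of length $\ge\ell>t$ from some $w$ (with $P_w$ a summand of $P^0$) to $u$; by the observation above, this path passes through a vertex $u'\in\cH$. Concatenating the resulting path from $u'\in\cH$ through $u$ to $v$ places $v$ in $\cH\cup\cH^+$.

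For part (2) I argue dually. Fixing a minimal injective coresolution $0\to M\to I^0\to I^1\to\cdots$ and letting $N$ denote the $\ell$-th cosyzygy, I use that $N\hookrightarrow I^\ell$: a vertex $v$ in the support of $N$ admits a path in $\cQ$ to some vertex $u$ with $I_u$ a summand of $I^\ell$. Proposition \ref{prop-res-path}(2) then furnishes a path of length $\ge \ell>t$ from $u$ to some $w$ with $I_w$ a summand of $I^0$; this path again meets $\cH$ at some $u'$. Hence $u$ has a path to $u'\in\cH$, so $u\in\cH\cup\cH^-$, and composing with the path $v\to u$ exhibits a path from $v$ into $\cH$, forcing $v\in\cH\cup\cH^-$.

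The main bookkeeping step requiring care will be the support-versus-summand translation for injectives: since $I_w$ has support equal to the set of vertices admitting a path to $w$, the support of the $\ell$-th cosyzygy is controlled by the summands of the injective envelope $I^\ell$ and not merely by its socle. Once this translation (and its easier dual for projectives) is in place, the path-geometric argument above, together with the definitions of $\cH^{\pm}$ and the convexity of $\cH$, completes the proof.
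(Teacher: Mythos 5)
Your proof is correct and takes essentially the same approach as the paper: both invoke Proposition \ref{prop-res-path} to produce a path of length at least $\ell>t$ terminating at a summand vertex of $P^\ell$ (dually $I^\ell$), and both use the definition of $t$ together with the convexity of $\cH$ to locate that vertex relative to $\cH$. The only cosmetic difference is that the paper argues by contradiction that a summand vertex of $P^\ell$ cannot lie in $\cH^-\cup\cH^o$ (using that a path ending there has all its support there), whereas you note the long path must pass through $\cH$ and then concatenate onward to the target vertex; the paper also leaves part (2) to the reader, which you carry out by the expected dual argument.
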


\begin{proof}
We only prove (1).  Let $M$ be a $\Lambda$-module and $
 \cdots\to  P^2\to P^1\to P^0\to M\to 0$ be a  minimal projective  
$\Lambda$-resolution of  $M$.    From the definitions of $\cH^-$, $\cH^+$ and $\cH^0$, 
any path ending at a vertex in $\cH^-\cup\cH^0$ has support in
$\cH^-\cup\cH^0$.  Suppose $\ell >t$.  Suppose that there is a nonzero
map from $P_v$ to $P^{\ell}$ and that $v$ is
vertex in $\cH^-\cup\cH^0$.
Then there would be a path of length $\ge \ell$ ending at $v$  by Proposition \ref{prop-res-path}.
This path has support in $\cH^-\cup\cH^0$. By our assumption on $t$, we
must have  $\ell\le t$, a contradiction.  Hence if $P_v$ is a summand of $P^{\ell}$,
$v$ must be a vertex in $\cH\cup\cH^+$.   It follows that $P^{\ell}$ has support
in $\cH\cup\cH^+$ and we are done.
\end{proof}

\begin{cor}\label{cor-ext} Let $M$ and $N$ be $\Lambda$-modules.  Then there are
$\Lambda$-modules $A$ and $B$ such that, $\ell\ge 2t+3$,
\[\Ext_{\Lambda}^{\ell}(M,N)\cong \Ext_{\Lambda}^{\ell-2t-2}(A, B)\] with
$A$ having support in $\cH\cup \cH^+$ and $B$ having support in $\cH\cup\cH^-$.

\end{cor}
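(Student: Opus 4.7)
The plan is to combine Lemma \ref{lem-dist} with the standard dimension-shifting isomorphisms for $\Ext$. Specifically, I will take $A$ to be a high-enough syzygy of $M$ and $B$ to be a high-enough cosyzygy of $N$, chosen so that both have support in the prescribed subquivers.

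First, recall the classical isomorphisms obtained by truncating a minimal projective resolution of $M$ or a minimal injective coresolution of $N$: for every $k\ge 1$ and every $\ell\ge k+1$ one has
\[
\Ext_{\Lambda}^{\ell}(M,N)\;\cong\;\Ext_{\Lambda}^{\ell-k}(\Sy^k M, N)\;\cong\;\Ext_{\Lambda}^{\ell-k}(M,\Sy^{-k}N),
\]
where $\Sy^k M$ denotes the $k$-th syzygy of $M$ and $\Sy^{-k}N$ the $k$-th cosyzygy of $N$. Applied twice, once in each variable with $k=t+1$, and whenever $\ell\ge 2t+3$ so that both shifts are legal, this yields
\[
\Ext_{\Lambda}^{\ell}(M,N)\;\cong\;\Ext_{\Lambda}^{\ell-2t-2}\bigl(\Sy^{t+1}M,\,\Sy^{-(t+1)}N\bigr).
\]

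Now set $A=\Sy^{t+1}M$ and $B=\Sy^{-(t+1)}N$. Since $t+1>t$, Lemma \ref{lem-dist}(1) guarantees that $A$ has support contained in $\cH\cup\cH^+$, and Lemma \ref{lem-dist}(2) guarantees that $B$ has support contained in $\cH\cup\cH^-$. This gives the desired isomorphism with the support conditions on $A$ and $B$.

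There is no substantive obstacle here; the argument is purely a bookkeeping exercise that assembles the two-sided dimension shift with the syzygy/cosyzygy support statement already established. The only mild care needed is to check the inequality $\ell\ge 2t+3$, which is exactly what is required for the two successive shifts (each by $t+1$) to remain in positive Ext-degree, and to verify that the composition of the two shift isomorphisms is natural enough that $A$ and $B$ can be chosen independently — both facts follow immediately from the standard construction via truncating the respective minimal resolutions.
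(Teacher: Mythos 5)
Your argument is exactly the paper's: take $A=\Omega^{t+1}(M)$ and $B=\Omega^{-(t+1)}(N)$, apply Lemma \ref{lem-dist} for the support claims, and use the two-sided dimension shift to obtain the isomorphism. You simply spell out the bookkeeping that the paper leaves implicit.
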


\begin{proof}
Take $A$ to be the $t+1$-th syzygy of $M$ and $B$ to be the $t+1$-th cosyzygy of $N$ and
apply Lemma \ref{lem-dist} and dimension shift.

\end{proof}

The next result considers
modules with support contained in either $\cH\cup\cH^+$ or $\cH\cup\cH^-$.

\begin{prop}\label{prop-restrict}  Suppose that $A$ is a $\Lambda$-module with support
contained in $\cH\cup\cH^+$ and $B$ is a $\Lambda$-module with support
contained in $\cH\cup\cH^-$.  If 
$\cdots\to  P^2\to P^1\to P^0\to A\to 0$  is a minimal projective  
$\Lambda$-resolution of $A$ and
$ 0\to B\to I_0\to I_1\to I_2\to\cdots$ is a minimal injective  
$\Lambda$-coresolution of $B$,   then each $P^n$ has support contained
in $\cH\cup\cH^+$ and each $I_n$ has support contained in $\cH\cup\cH^-$.

\end{prop}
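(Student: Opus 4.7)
The plan is to prove both parts by induction on homological degree, the essential content being a closure property of the vertex subsets $\cH\cup\cH^+$ and $\cH\cup\cH^-$ of $\cQ_0$. I would begin by recording this observation: if $v\in\cH\cup\cH^+$ and there is a path in $\cQ$ from $v$ to a vertex $w$, then $w\in\cH\cup\cH^+$. Indeed, if $v\in\cH$ and $w\notin\cH$, then $w\in\cH^+$ by the very definition of $\cH^+$; and if $v\in\cH^+$, concatenating a path from some vertex of $\cH$ to $v$ with the given path yields a path from $\cH$ to $w$, so again $w\in\cH\cup\cH^+$. The analogous statement for $\cH\cup\cH^-$ (closed under predecessors along paths ending in the set) is symmetric.

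For part (1), the key is that for each vertex $v$, the support of $P_v=v\Lambda$ is contained in the set of vertices $w$ admitting a path from $v$ to $w$ in $\cQ$; this holds because $v\Lambda w$ is the $K$-span of the images of paths from $v$ to $w$, and imposing the admissibility condition $I\subseteq J^2$ only kills such paths, never creates new ones. Combined with the closure property above, $v\in\cH\cup\cH^+$ implies $\mathrm{supp}(P_v)\subseteq\cH\cup\cH^+$. Since $P^0\to A$ is a projective cover, its indecomposable summands are the $P_v$ with $v\in\mathrm{supp}(A)\subseteq\cH\cup\cH^+$, so $\mathrm{supp}(P^0)\subseteq\cH\cup\cH^+$. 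The first syzygy $\Omega A=\ker(P^0\to A)$ is a submodule of $P^0$ and therefore also has support in $\cH\cup\cH^+$. Replacing $A$ by $\Omega A$ and iterating, each $P^n$ has support in $\cH\cup\cH^+$.

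The proof of part (2) is entirely dual. The support of the indecomposable injective $I_v$ is contained in the set of vertices $w$ admitting a path to $v$ in $\cQ$, so $v\in\cH\cup\cH^-$ forces $\mathrm{supp}(I_v)\subseteq\cH\cup\cH^-$. The injective envelope $I_0$ of $B$ is a direct sum of indecomposables $I_v$ with $v\in\mathrm{supp}(B)\subseteq\cH\cup\cH^-$, whence $\mathrm{supp}(I_0)\subseteq\cH\cup\cH^-$; the first cosyzygy $I_0/B$ inherits this support as a quotient of $I_0$, and induction on the coresolution finishes the argument. I do not foresee any real obstacle; the only point requiring a moment's care is the identification of the supports of $P_v$ and $I_v$ with the appropriate sets of path-reachable vertices, and this is standard once one unwinds the combinatorial description of $v\Lambda w$ and its $K$-dual in terms of paths in $\cQ$.
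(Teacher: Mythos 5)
Your proof is correct and follows essentially the same line as the paper's: the paper's key observation—that any path ending at a vertex in $\cH^-\cup\cH^0$ lies entirely in $\cH^-\cup\cH^0$—is precisely the contrapositive of your closure statement for $\cH\cup\cH^+$, and the paper then cites Proposition \ref{prop-res-path} to propagate this along the resolution, whereas you re-derive that propagation directly by a one-step syzygy induction. The substance is identical; you have merely made the inductive step explicit rather than invoking the earlier proposition.
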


\begin{proof}  We use the observation that any path ending at a vertex in $\cH^-\cup\cH^0$
(respectively, beginning at a vertex in $\cH^+\cup\cH^0$)
has support in $\cH^-\cup\cH^0$ (resp., $\cH^+\cup\cH^0)$.  The result follows from
Proposition  \ref{prop-res-path}.

\end{proof}

Let $C$ be a $\Lambda$-module.  Define $C^+$ to be the largest submodule of $ C $
 having support contained in $\cH^+$ and $C_-$ be the smallest submodule of $C$ such
that $C/C_-$ has support contained $\cH^-$.
Let $e$ be the sum of the vertices in $\cH$, $e^+$ be the sum of the vertices in $\cH^+$,
$e^-$ be the sum of the vertices in $\cH^-$, and $e^0$ be the sum of the vertices in $\cH^0$.
Letting $\Gamma$ be the algebra associated to $(\cH,\Lambda)$, we have the
following result, whose proof is left to the reader.

\begin{lemma}\label{lem-plus} Let $C$ be a $\Lambda$-module.
\begin{enumerate}
\item  $C^+$ is the trace of $e^+\Lambda$ in $C$.
\item If $C$ has support in $\cH\cup\cH^-$, then $C_-$ is the trace of $e\Lambda$ in $C$.
\item If $C$ has support in $\cH\cup\cH^+$, then $C/C^+$ has support contained in $\cH$ and
hence is a $\Gamma$-module.
\item If $C$ has support in $\cH\cup\cH^-$, then $C_-$ has support in $\cH$ and hence is
a $\Gamma$-module.
\item If $C$ has support in $\cH\cup\cH^+$, then  $C^+=Ce^+\Lambda$
\item  If $C$ has support in $\cH\cup\cH^+$, then $C/C^+
\cong C\otimes_{\Lambda}\Gamma$.

\end{enumerate}\end{lemma}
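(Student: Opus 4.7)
My plan is to reduce everything to two elementary observations. First, for any $\Lambda$-module $C$ and any idempotent $f\in\Lambda$, the trace $\tr{f\Lambda}{C}$ equals $Cf\Lambda$: maps $f\Lambda\to C$ correspond bijectively to elements of $Cf$ via $\varphi\mapsto\varphi(f)$, so the trace is $\sum_{c\in Cf} c\Lambda = Cf\Lambda$. Second, the convexity of $\cH$ together with the definitions of $\cH^+,\cH^-,\cH^0$ force two quiver-level facts: (a) every path in $\cQ$ starting in $\cH^+$ stays in $\cH^+$, and (b) no path in $\cQ$ starts in $\cH$ and ends in $\cH^-\cup\cH^0$. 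To see (b), any such path would concatenate with a witness showing $v\in\cH^-$ to produce a path from $\cH$ to $\cH$ through an external vertex, contradicting convexity of $\cH$; for $v\in\cH^0$ the definition already rules it out. Fact (a) follows from a similar concatenation argument. Because $I\subseteq J^2$ preserves length-$0$ pieces, these translate at the algebra level to $e^+\Lambda v\neq 0$ iff $v\in\cH^+$, and $e\Lambda v=0$ for all $v\in\cH^-\cup\cH^0$.

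Once these are in hand, parts (1), (2), and (5) fall out at once. For (1) and (5), $\tr{e^+\Lambda}{C}=Ce^+\Lambda$ has support contained in $\cH^+$ by (a); conversely any submodule $N\subseteq C$ with support in $\cH^+$ satisfies $N=Ne^+$, so for each $n\in N$ the map $e^+\Lambda\to C$ sending $e^+\mapsto n$ puts $n$ inside the trace, giving $N\subseteq C^+$. For (2), since $C$ has support in $\cH\cup\cH^-$ the condition ``$C/N$ has support in $\cH^-$'' reduces to $Ce\subseteq N$, equivalent to $Ce\Lambda\subseteq N$; hence $C_-=Ce\Lambda=\tr{e\Lambda}{C}$.

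Parts (3), (4), and (6) are then cosmetic. For (3), $C/C^+=C/Ce^+\Lambda$ is killed by $e^+$ by construction and by $e^0+e^-$ since $C$ has support in $\cH\cup\cH^+$; hence it is annihilated by $e'$ and descends to a $\Gamma$-module. For (4), $C_-=Ce\Lambda$ avoids $\cH^-$ by fact (b) and avoids $\cH^+\cup\cH^0$ by the support hypothesis on $C$, leaving $\cH$. Finally (6) uses the standard identity $C\otimes_\Lambda(\Lambda/\langle e'\rangle)=C/Ce'\Lambda$; when $C$ is supported in $\cH\cup\cH^+$ the terms $Ce^-\Lambda$ and $Ce^0\Lambda$ vanish and $Ce'\Lambda$ collapses to $Ce^+\Lambda=C^+$. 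I do not expect any serious obstacle; the only care needed is in the convexity bookkeeping for observations (a) and (b), after which each item reduces to a one-line trace or tensor manipulation.
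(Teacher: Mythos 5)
The paper leaves this lemma to the reader, so there is no proof to compare against; the question is simply whether your argument is sound. It is. The two quiver-level facts you isolate are exactly the right leverage, and both are correct consequences of convexity: if a path left $\cH^+$ and re-entered $\cH$, hit $\cH^-$, or hit $\cH^0$, you could splice it with witness paths to violate convexity of $\cH$ or the definition of $\cH^0$; similarly a path from $\cH$ into $\cH^-\cup\cH^0$ splices to a contradiction. From these, your translations $e^+\Lambda v\neq 0\iff v\in\cH^+$ and $e\Lambda v=0$ for $v\in\cH^-\cup\cH^0$ follow because paths in $K\cQ$ from a vertex set $S$ to $v$ span $e_S K\cQ\, v$, and $I\subseteq J^2$ keeps the trivial path $v$ nonzero. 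The trace identity $\tr{f\Lambda}{C}=Cf\Lambda$ for an idempotent $f$ is standard, and your verifications that $Ce^+\Lambda$ is the largest submodule supported in $\cH^+$ and that $Ce\Lambda$ is the smallest $N$ with $C/N$ supported in $\cH^-$ (under the respective support hypotheses) are correct, as is the computation $C\otimes_\Lambda\Gamma\cong C/Ce'\Lambda$ collapsing to $C/Ce^+\Lambda$ when $Ce^-=Ce^0=0$. One very small point worth spelling out in item (4): $Ce\Lambda$ is killed at $\cH^-\cup\cH^0$ by fact (b) and at $\cH^+$ because $Ce\Lambda\subseteq C$ and $C$ already vanishes there; once $Me'=0$ one has $M\langle e'\rangle=Me'\Lambda=0$, so $M$ is genuinely a $\Gamma$-module. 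With that made explicit, the proof is complete and correct.
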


Before proving a
result that relates projective $\Lambda$-resolutions of $\Lambda$-modules $A$
whose support is contained in $\cH\cup\cH^+$ and projective $\Gamma$-resolutions
of $A/A^+$, from  Proposition \ref{prop-epi}(2), we know
that      
$\Lambda e^+$ is a left projective $\Lambda$-module
and hence $\Gamma$ is a left projective $\Lambda$-module. Noting
that $\Lambda e^+\Lambda=\Lambda e^+$, we see that 
$\Lambda e^+\Lambda$ is a \emph{strong idempotent ideal} as defined
in \cite{APT} and that
the following result is similar to Theorem 1.6 of \cite{APT}. 

\begin{prop}\label{prop-+resol} Let $A$ be a $\Lambda$-module whose support
is contained in $\cH\cup\cH^+$.  
If $\cdots\to P^2\to P^1\to P^0\to A\to 0$ is a minimal projective
$\Lambda$-resolution of $A$, then
\[\cdots\to P^2/( P^2)^+\to P^1/( P^1)^+\to P^0/( P^0)^+\to A/A^+\to 0\]
is a minimal projective $\Gamma$-resolution of $A/A^+$.
\end{prop}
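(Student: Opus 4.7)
The plan is to apply an exact functor to the given resolution and recognise the result as a minimal projective $\Gamma$-resolution of $A/A^+$. Concretely, the functor I will use is $-\otimes_\Lambda\Delta$, where $\Delta:=\Lambda/\Lambda e^+$.

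The first step is a structural observation: because $\cH$ is convex and each vertex of $\cH^+$ is reachable from $\cH$, no path starting at a vertex of $\cH^+$ can terminate at a vertex of $\cH$ (this would force its source into $\cH$) or at a vertex of $\cH^-\cup\cH^0$ (by the definitions of those sets combined with convexity of $\cH$). Hence $e^+\Lambda = e^+\Lambda e^+$, which gives $\Lambda e^+\Lambda = \Lambda e^+$; in particular, $\Lambda e^+$ is a two-sided ideal. As a left $\Lambda$-module $\Delta\cong \Lambda(1-e^+)$, a direct summand of $\Lambda$, hence projective; equivalently, applying Proposition \ref{prop-epi}(2) to the convex subquiver $\cH\cup\cH^-\cup\cH^0$ (whose $(\cdot)^+$-extension is precisely $\cH^+$) gives the same conclusion.

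Left projectivity of $\Delta$ makes $-\otimes_\Lambda\Delta$ exact. For any right $\Lambda$-module $C$ with support in $\cH\cup\cH^+$ one has $C\otimes_\Lambda\Delta = C/C\cdot\Lambda e^+ = C/Ce^+$, and Lemma \ref{lem-plus}(5) identifies this with $C/C^+$. Since $A$ and each $P^i$ (the latter by Proposition \ref{prop-restrict}) have such support, applying the functor produces the exact sequence claimed in the proposition. To see that the resulting terms are projective over $\Gamma$, I would split by vertex: a summand $v\Lambda$ of $P^i$ with $v\in\cH^+$ satisfies $v\Lambda=v\Lambda e^+$ and contributes zero; a summand $v\Lambda$ with $v\in\cH$ satisfies $v\Lambda e^-=v\Lambda e^0=0$, whence $v\langle e'\rangle = v\Lambda e^+ = (v\Lambda)^+$ and $v\Lambda/(v\Lambda)^+ \cong v\Gamma$, the indecomposable projective $\Gamma$-module at $v$.

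Minimality is inherited: the differentials of the original minimal resolution land in $P^{\bullet-1}\br$, and the canonical surjection $\Lambda\to\Gamma$ sends $\br$ onto the Jacobson radical of $\Gamma$, so the induced differentials on the quotient resolution land in the appropriate radical. The main obstacle is the opening structural step — verifying that $\Lambda e^+$ is a two-sided ideal and that $C\otimes_\Lambda\Delta$ realises $C/C^+$ on modules supported in $\cH\cup\cH^+$. Once that is in hand, exactness, projectivity over $\Gamma$, and minimality all follow formally.
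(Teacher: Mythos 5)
Your argument follows the same strategy the paper uses---apply an exact tensor functor to the minimal $\Lambda$-resolution, identify the terms with the quotients $(-)/(-)^+$ via Lemma~\ref{lem-plus}, and note that minimality is preserved---but you have correctly identified the coefficient object, where the paper's own proof contains a slip. The paper asserts that $\Gamma=\Lambda/\langle e'\rangle$ is projective as a left $\Lambda$-module, citing Proposition~\ref{prop-epi}(2); this is false whenever $\cH^-\neq\emptyset$ (for instance, take $\cQ$ to be $1\to 2$ with a loop $b$ at $2$ and $I=\langle b^2\rangle$: here $\cH=\{2\}$, $\Gamma$ has $K$-dimension $2$, while the indecomposable projective left $\Lambda$-modules have dimensions $1$ and $4$). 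What is genuinely left projective, as you observe either from $e^+\Lambda=e^+\Lambda e^+$ directly or by applying Proposition~\ref{prop-epi}(2) to the convex subquiver $\cH\cup\cH^-\cup\cH^0$ whose $(\,\cdot\,)^+$ is $\cH^+$, is $\Delta=\Lambda/\Lambda e^+\Lambda=\Lambda/\Lambda e^+$, and this is exactly what you tensor with. Since $C\otimes_\Lambda\Delta\cong C/C^+\cong C\otimes_\Lambda\Gamma$ for any $C$ supported in $\cH\cup\cH^+$, the paper's conclusion is unaffected, but your choice of $\Delta$ supplies the honest justification for exactness. Your vertex-by-vertex check that the quotient terms are projective $\Gamma$-modules and your minimality argument are both correct; the paper leaves these to the reader. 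In short: same route, more careful execution at the crucial step.
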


\begin{proof}
By  Proposition \ref{prop-epi}(2), we see that
$\Gamma$ is a left projective $\Lambda$-module hence 
tensoring with $\Gamma$ is exact. Since $A/A^+\cong A\otimes_{\Lambda}\Gamma$
and $P^n/(P^n)^+\cong P^n\otimes_{\Lambda}\Gamma$ and minimality
is clear, the result follows. 

\end{proof}

We remark that there is a ``dual'' result for injective $\Lambda$-coresolutions of $\Lambda$-modules
$B$ whose support is contained in $\cH\cup \cH^-$ and injective $\Gamma$-coresolutions
of $B_-$.

\begin{prop}\label{prop-hom} Let $A$ and $B$ be $\Lambda$-modules with support
contained in $\cH\cup\cH^+$ and $\cH\cup\cH^-$ respectively.  Then, for $n\ge 0$,
\[ \Ext_{\Lambda}^n(A, B)\cong\Ext_{\Gamma}^n(A/A^+,B_-)\]
\end{prop}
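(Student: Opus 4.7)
The plan is to take a minimal projective $\Lambda$-resolution $\cdots\to P^2 \to P^1 \to P^0 \to A \to 0$ of $A$ and show that applying $\Hom_\Lambda(-,B)$ produces, term by term and compatibly with differentials, the same cochain complex one obtains by applying $\Hom_\Gamma(-,B_-)$ to the minimal projective $\Gamma$-resolution of $A/A^+$ supplied by Proposition \ref{prop-+resol}. Taking cohomology will then deliver the claimed isomorphism in every degree $n\ge 0$.

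The technical heart is to produce a natural isomorphism
\[\Hom_\Lambda(P^n, B) \;\cong\; \Hom_\Gamma\bigl(P^n/(P^n)^+,\,B_-\bigr),\]
which I will establish in three small steps. By Proposition \ref{prop-restrict} each $P^n$ has support in $\cH\cup\cH^+$, so Lemma \ref{lem-plus}(5) gives $(P^n)^+ = P^n e^+\Lambda$. Since $B$ has support in $\cH\cup\cH^-$ we have $Be^+=0$, and therefore any $\Lambda$-homomorphism $f\colon P^n \to B$ satisfies $f((P^n)^+) = f(P^n)e^+\Lambda \subseteq Be^+\Lambda = 0$. Hence $f$ factors through $P^n/(P^n)^+$, giving $\Hom_\Lambda(P^n,B)\cong \Hom_\Lambda(P^n/(P^n)^+, B)$.

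Next, $P^n/(P^n)^+$ has support in $\cH$ by Lemma \ref{lem-plus}(3). For any map $g\colon P^n/(P^n)^+ \to B$, the composite with the surjection $B\to B/B_-$ has source supported in $\cH$ and target supported in $\cH^-$; since the vertex sets $\cH_0$ and $\cH^-_0$ are disjoint, such a map must vanish, so $g$ factors through $B_-$. This yields $\Hom_\Lambda(P^n/(P^n)^+, B) \cong \Hom_\Lambda(P^n/(P^n)^+, B_-)$. Finally, both $P^n/(P^n)^+$ and $B_-$ are annihilated by $e'$ and are therefore $\Gamma$-modules on which $\Lambda$-linearity and $\Gamma$-linearity agree, giving the identification $\Hom_\Lambda(P^n/(P^n)^+, B_-) = \Hom_\Gamma(P^n/(P^n)^+, B_-)$.

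Stringing these isomorphisms together in each degree and observing that they are natural in $P^n$ identifies $\Hom_\Lambda(P^\bullet, B)$ with $\Hom_\Gamma(P^\bullet/(P^\bullet)^+, B_-)$ as cochain complexes. The former computes $\Ext_\Lambda^n(A,B)$, while Proposition \ref{prop-+resol} guarantees the latter computes $\Ext_\Gamma^n(A/A^+, B_-)$. There is no essential obstacle here; the crux is simply executing the three support-based cancellations cleanly while keeping the roles of $\cH$, $\cH^+$, and $\cH^-$ straight.
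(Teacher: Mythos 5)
Your proof is correct and follows essentially the same strategy as the paper: establish a natural isomorphism of $\Hom$ groups using the support conditions and convexity of $\cH$, then apply it across the projective resolution supplied by Proposition \ref{prop-+resol}. The only cosmetic difference is that the paper states the $\Hom$-level isomorphism for general $A$ and $B$ (via Lemma \ref{lem-plus}(6) and $-\otimes_\Lambda\Gamma$) and then invokes naturality, whereas you verify it term-by-term on the $P^n$ using Lemma \ref{lem-plus}(5); the underlying argument is the same.
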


\begin{proof}  First we note that 
$\Hom_{\Lambda}(A, B)$ and $\Hom_{\Gamma}(A/A^+,B_-)$ are naturally
isomorphic.  To see this, if $f\colon A\to B$ is a $\Lambda$-homomorphism,
then, using Lemma \ref{lem-plus}(6), $f\otimes 1_{\Gamma}\colon A/A^+\to B/B^+$.  But $B^+=(0)$ since
$B$  has support in $\cH\cup\cH^-$ and $\cH$ is convex. 
Hence, $f\otimes 1_{\Gamma}\colon A/A^+\to B$.
Next we note that composing $f\otimes 1_{\Gamma}$ with 
the canonical surjection  $B\to B/B_-$ is $0$ since
$B/B_-$ has support contained in $\cH^-$ and $A/A^+$ has support contained
in $\cH$.   We conclude that the image of $f\otimes 1_{\Gamma}$ is contained
in $B_-$.  Thus, we have constructed a map from $\Hom_{\Lambda}(A,B)$ to
$\Hom_{\Gamma}(A/A^+,B_-)$.  To see that this map  is a monomorphism
we note that $\Hom(A^+,B)=(0)$ since $A^+$ has support contained in $\cH^+$, and the convexity of $\cH$ implies that $\cH^+\cap\cH^-$ is
empty.

On the other hand, if $g\colon A/A^+\to B_-$ is a $\Gamma$-homomorphism,
then composing with the canonical surjection $A\to A/A^+$ and the inclusion
$B_-\to B$ we obtain a map from 
$\Hom_{\Gamma}(A/A^+,B_-)$ to $\Hom_{\Lambda}(A,B)$.  The maps defined
are inverse to each other and we are done.   

Using the naturality of the isomorphism \[\Hom_{\Lambda}(A,B)\to\Hom_{\Gamma}(A/A^+,B_-)\]
 and Proposition \ref{prop-+resol},
the result follows.

\end{proof}

The following result is the main result of this section.   For a $\Lambda$-module $L$,
 we let $ \Omega^n(L)$ denote the $n$-th syzygy of $L$ in a  minimal
projective $\Lambda$-resolution of $L$ and 
$ \Omega^{-n}(L)$ denote the $n$-th cosyzygy of $L$ in a  minimal
injective $\Lambda$-coresolution of $L$
 
\begin{thm}\label{thm-hh} Let $K$ be a field, $\cQ$ a finite quiver, and $\Lambda=K\cQ/I$, where $I$ is an
admissible ideal $I$ in $K\cQ$.  Let $\cH$ be the homological heart of $\cQ$ and
$\Gamma$ be the algebra associated to $(\Lambda,\cH)$.  Let $t$ be
length of the longest path in $ \cQ$ having support contained in $\cH^+\cup\cH^-\cup\cH^0$.
Then \begin{enumerate}
\item $\gldim(\Lambda)$ is finite if and only if $\gldim (\Gamma)$ is finite.
\item $\findim(\Lambda)$ is finite if and only if $\findim(\Gamma)$ is finite.
\item If $M$ and $N$ are $\Lambda$-modules and $\ell\ge 2t+3$, then
\[\Ext^{\ell}_{\Lambda}(M,N)\text{ is naturally  isomorphic
to }\Ext^{\ell-2t-2}_{\Gamma}(A_M, B_N),\] where
$A_M=\Omega^{t+1}(M)/(\Omega^{t+1}(M))^+$ and $B_N=\Omega^{-t-1}(N)_-$.

\end{enumerate}

\end{thm}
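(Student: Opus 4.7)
My plan is to prove part (3) first, since it supplies the dimension-shifted comparison that will force parts (1) and (2) immediately. Along the way I will use that $\cH$ is convex (Proposition \ref{prop-hh1}(2)), so all the machinery of Section 3 and the earlier results of this section apply.

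For part (3), given $\Lambda$-modules $M,N$ and $\ell\ge 2t+3$, Corollary \ref{cor-ext} already provides a natural isomorphism
\[
\Ext^{\ell}_{\Lambda}(M,N)\;\cong\;\Ext^{\ell-2t-2}_{\Lambda}(A,B),
\]
where $A=\Omega^{t+1}(M)$ has support in $\cH\cup\cH^+$ and $B=\Omega^{-t-1}(N)$ has support in $\cH\cup\cH^-$, by Lemma \ref{lem-dist}. Proposition \ref{prop-hom} then rewrites the right-hand side as $\Ext^{\ell-2t-2}_{\Gamma}(A/A^+,B_-)=\Ext^{\ell-2t-2}_{\Gamma}(A_M,B_N)$. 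The composition of the two natural isomorphisms gives the statement of (3). The only bookkeeping is checking that the identification of $A_M$ and $B_N$ in the statement matches what Corollary \ref{cor-ext} and Proposition \ref{prop-hom} produce, which is immediate from their definitions.

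For the forward implications in (1) and (2), I would invoke Corollary \ref{cor-epi}: since $\cH$ is convex, finiteness of $\gldim(\Lambda)$ (respectively $\findim(\Lambda)$) forces the same for $\Gamma$. For the converse in (1), assume $\gldim(\Gamma)=d<\infty$; then for any $\Lambda$-modules $M,N$ and any $\ell>2t+2+d$, part (3) yields $\Ext^{\ell}_{\Lambda}(M,N)\cong\Ext^{\ell-2t-2}_{\Gamma}(A_M,B_N)=0$, so $\gldim(\Lambda)\le 2t+2+d$. For the converse in (2), suppose $\findim(\Gamma)=d<\infty$ and take a $\Lambda$-module $M$ with $p=\pd_{\Lambda}(M)<\infty$; I may assume $p\ge 2t+3$. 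The key point is that $A_M$ has finite $\Gamma$-projective dimension: by Lemma \ref{lem-dist}, $\Omega^{t+1}(M)$ has support in $\cH\cup\cH^+$, so Proposition \ref{prop-+resol} produces a minimal projective $\Gamma$-resolution of $A_M$ by modding out the $(-)^+$-parts of a minimal $\Lambda$-projective resolution of $\Omega^{t+1}(M)$, giving $\pd_{\Gamma}(A_M)\le \pd_{\Lambda}(\Omega^{t+1}(M))\le p-t-1<\infty$. Therefore $\pd_{\Gamma}(A_M)\le d$. Choosing $N$ with $\Ext^{p}_{\Lambda}(M,N)\ne 0$ and applying part (3) with $\ell=p$ gives $\Ext^{p-2t-2}_{\Gamma}(A_M,B_N)\ne 0$, whence $p-2t-2\le d$, that is $\findim(\Lambda)\le 2t+2+d$.

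The main conceptual obstacle is the last step: turning finiteness of $\pd_{\Lambda}(M)$ into finiteness of $\pd_{\Gamma}(A_M)$, which is exactly what prevents the naive ``compare Ext then apply finitistic dimension of $\Gamma$'' argument from failing. This transfer is only possible because $\Gamma$ is left $\Lambda$-projective (via Proposition \ref{prop-epi}(2), which Proposition \ref{prop-+resol} leverages), so applying $-\otimes_{\Lambda}\Gamma$ to a $\Lambda$-projective resolution of $\Omega^{t+1}(M)$ preserves exactness and minimality. Once this is in place, everything else is a bounded dimension-shift.
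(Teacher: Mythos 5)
Your proof is correct and follows the same overall architecture as the paper: establish part (3) by composing Corollary \ref{cor-ext} with Proposition \ref{prop-hom}, use Corollary \ref{cor-epi} for the forward implications of (1) and (2), and derive the converse of (1) from (3). The one place you diverge is the converse of (2): the paper argues directly, applying Proposition \ref{prop-+resol} to a high syzygy of $M$ to conclude $\pd_\Lambda(M)\le \findim(\Gamma)+t$ (or $+t+1$, correcting the off-by-one $\Omega^t$ versus $\Omega^{t+1}$ in the text), whereas you route through part (3), still invoking Proposition \ref{prop-+resol} to get $\pd_\Gamma(A_M)<\infty$ but then choosing $N$ with $\Ext^p_\Lambda(M,N)\ne 0$ to squeeze $p\le 2t+2+\findim(\Gamma)$. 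Both arguments are sound; the paper's is more direct and yields a tighter bound, while yours has the virtue of uniformity, treating (1) and (2) as consequences of (3). One implicit point worth noting in your (2): the isomorphism in part (3) together with $\Ext^p_\Lambda(M,-)\ne 0$ forces $A_M\ne 0$, so the choice of $N$ really does yield a nonzero $\Gamma$-Ext group.
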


\begin{proof} Since $\cH$ is convex, applying
Theorem \ref{thm-convex-ext}, we see \sloppy that
 $\gldim(\Lambda)\ge \gldim (\Gamma)$ and
$\findim(\Lambda)\ge\findim(\Gamma)$.
Thus, if  $\gldim(\Lambda)$ is finite then $\gldim (\Gamma)$ is finite
and if $\findim(\Lambda)$ is finite then $\findim(\Gamma)$ is finite.
We note that part (3) implies that  $\gldim(\Lambda)\le \gldim (\Gamma)$ and
hence part (1) would follow.

Thus, we must show part (3) holds and show that $\findim(\Lambda)$ is finite if $\findim(\Gamma)$ is finite.
First, assume that $\findim(\Gamma)$ is finite. We prove that
$\findim(\Lambda)\le \findim(\Gamma)+t$.  Let $M$ be a $\Lambda$-module having
finite projective dimension.   Then $\Omega^t(M)$ has finite projective dimension over
$\Lambda$.  By Proposition \ref{prop-+resol}, the projective dimension of $\Omega^t(M)$ over
$\Lambda$
is the same as the projective dimension of $\Omega^t(M)/(\Omega^t(M))^+$ over $\Gamma$.
Thus the projective dimension of $\Omega^t(M)$ over $\Lambda$ $\le \findim(\Gamma)+t$ and (2) follows.

Part (3) follows from the proof of Corollary \ref{cor-ext} and Proposition \ref{prop-hom}.
\end{proof}

\section{An Application}\label{Application}

 We  recall the definition of path connectedness. A 
subquiver $\cL$ of $\cQ$ having vertex set $X$, is  \emph{path connected} if
for each pair of vertices $v$ and $w$ in $X$, there is  both a  directed
path from $v$ to $w$ in $\cL$ and a directed path from $w$ to $v$ in $\cL$.

Viewing a vertex as an oriented cycle at $v$ with no arrows,  called the 
\emph{trivial cycle at $v$}, we have an equivalence relation
on the vertices of $\cQ$ given by setting two vertices $v$ and $w$  equivalent
if there is an oriented cycle in $\cQ$ in which both vertices occur.  The equivalence classes of 
this relation are called the \emph{path connected components of $\cQ$}.   
We call a path connected component of $\cQ$ \emph{trivial} if it consists of one vertex $v$ 
and the only cycle in $\cQ$ on which
$v$ lies is the trivial cycle at $v$.
 A path connected component is called \emph{nontrivial}
if it contains at least one arrow.

  In this section we show that the finitistic dimension conjecture reduces to the case where the quiver is path connected.  More precisely, we have the following result.
  \begin{thm}\label{thm-appl}
If  the finitistic dimension 
of every algebra $K\cQ/I$, with $I$ admissible and $\cQ$ path connected, is finite, then the finitistic dimension
of every algebra $K\cQ/I$	is finite, for all choices of $\cQ$ and $I$ admissible . 
\end{thm}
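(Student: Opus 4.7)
The plan is to combine Theorem~\ref{thm-hh}(2) with an induction on the number $k$ of nontrivial path-connected components of $\cQ$. By Theorem~\ref{thm-hh}(2), I may replace $\Lambda=K\cQ/I$ by the algebra associated to its homological heart $\cH$ without affecting whether $\findim$ is finite, so I assume $\cQ=\cH$. If $k=0$ then $\cQ$ is acyclic and $\gldim(\Lambda)<\infty$ by the standard fact for triangular algebras with admissible relations. If $k=1$, I claim $\cH$ is path-connected: any putative trivial glue vertex between two vertices of the unique nontrivial component $\cC_1$ would, together with a return path inside $\cC_1$, lie on a nontrivial cycle, contradicting triviality; so the hypothesis of the theorem applies directly.

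For $k\ge 2$, I would pick a sink nontrivial component $\cC$ of $\cH$: the component DAG (contracting each path-connected component to a node) is acyclic, so it has a sink, and no sink can be a trivial component since a trivial glue vertex always has outgoing arrows toward another component. Set $\cL=\cH\setminus\cC$. Then $\cL$ is convex, because a path $v\to u\to w$ with $v,w\in\cL$ and $u\in\cC$ would give both $\cL\to\cC$ and $\cC\to\cL$, impossible when $\cC$ is a sink. By Proposition~\ref{prop-2def}, $K\cL/(I\cap K\cL)$ is the algebra associated to $(\cL,\Lambda)$ and its quiver has $k-1$ nontrivial components, so by the inductive hypothesis $\findim(K\cL/(I\cap K\cL))<\infty$; the theorem's hypothesis applied to the path-connected quiver $\cC$ gives $\findim(K\cC/(I\cap K\cC))<\infty$.

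To conclude, $e_\cC\Lambda e_\cL=0$ places $\Lambda$ in block upper-triangular form
\[
\Lambda \;\cong\; \left(\begin{array}{cc}K\cL/(I\cap K\cL) & M \\ 0 & K\cC/(I\cap K\cC)\end{array}\right)
\]
with $M=e_\cL\Lambda e_\cC$. Since $\cC^+=\emptyset$, Proposition~\ref{prop-epi}(1) makes $\Lambda\to K\cC/(I\cap K\cC)$ a homological epimorphism with $K\cC/(I\cap K\cC)\cong e_\cC\Lambda$ right projective over $\Lambda$; if $\cC$ is disconnected from $\cL$ in the underlying graph then $\Lambda$ is a direct product and we are done, otherwise path-connectedness of $\cC$ forces $\cH=\cL\cup\cL^+$ and Proposition~\ref{prop-epi}(2) makes $K\cL/(I\cap K\cL)$ a projective left $\Lambda$-module. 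The plan is to combine these structural facts with the short exact sequence $0\to Ne_\cC\to N\to N/Ne_\cC\to 0$ (for a right $\Lambda$-module $N$ of finite projective dimension) and its associated $\Ext$ long exact sequence to bound $\pd_\Lambda(N)$ in terms of $\findim(K\cL/(I\cap K\cL))$ and $\findim(K\cC/(I\cap K\cC))$. The main obstacle is this last step: a homological epimorphism only controls $\Ext$ between modules coming from the quotient, so transferring finite projective dimension through the off-diagonal bimodule $M$ requires a triangular-matrix-ring finitistic-dimension estimate of the type in \cite{FGR, APT}, which is what I expect to have to invoke or reprove in the setting provided by the projectivity data above.
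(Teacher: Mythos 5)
Your proposal follows essentially the same route as the paper: reduce to the homological heart via Theorem~\ref{thm-hh}(2), induct on the number $k$ of nontrivial path-connected components, pick an extremal node in the component DAG so that a nontrivial path-connected component can be split off, observe the resulting block-triangular form of $\Lambda$, and finish with a finitistic-dimension bound for triangular matrix rings. The only substantive differences are cosmetic or unnecessary: you choose a sink component $\cC$ where the paper chooses a source $\cL$ (with $\cL^-$ empty) --- a mirror-image argument, both fine --- and where the paper simply \emph{cites} the Fossum--Griffith--Reiten result as Theorem~\ref{thm-fgr} to conclude, you instead detour through homological-epimorphism and projectivity observations (Proposition~\ref{prop-epi}) and then declare an ``expectation'' that you'll need an FGR-type estimate. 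That last paragraph is the one real soft spot: the Ext long-exact-sequence strategy you sketch does not by itself bound $\pd_\Lambda(N)$, and the homological epimorphism to the quotient algebra only controls $\Ext$ of modules pulled back from the quotient, not general $\Lambda$-modules. None of that machinery is needed; once you have $e_\cC\Lambda e_\cL=0$ and identify the diagonal blocks as $K\cL/(I\cap K\cL)$ and $K\cC/(I\cap K\cC)$ (using Proposition~\ref{prop-2def} and convexity of both $\cL$ and $\cC$), the inductive hypothesis and the path-connected hypothesis give finite finitistic dimension for each block, and Theorem~\ref{thm-fgr} applies directly to the triangular matrix ring. So the plan is correct and matches the paper; just replace the speculative closing paragraph with a direct appeal to Theorem~\ref{thm-fgr}.
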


We begin with a result about the finitistic dimension of triangular matrix rings, \cite{FGR}Theorem 4.20, also see \cite{PR}.
The Fossum-Griffith-Reiten theorem is more general and also provides bounds which we do not need.

\begin{thm}\label{thm-fgr}\cite{FGR}Let $A$ and $B$ be artin algebras and $M$ a finitely generated
$A$-$B$-bimodule.  If the finitisitic dimensions of  $A$ and $B$ are finite, then the finitistic
dimension of the upper triangular matrix ring $\left( \begin{array}{cc} A&M\\0&B\end{array}\right)$
is finite.
\end{thm}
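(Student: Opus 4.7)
The plan is to exploit the block triangular structure of $\Lambda$ by representing left $\Lambda$-modules as triples $(X, Y, \varphi)$, where $X$ is a left $A$-module, $Y$ a left $B$-module, and $\varphi\colon M\otimes_B Y\to X$ an $A$-linear map. Under this equivalence, the indecomposable projective $\Lambda$-modules split into two families: $(P,0,0)$ for $P$ projective over $A$, and $\Lambda e_2\otimes_B Q=(M\otimes_B Q, Q, \mathrm{id})$ for $Q$ projective over $B$, where $e_1, e_2$ denote the diagonal idempotents of $\Lambda$. The functors $e_i\cdot=\Hom_\Lambda(\Lambda e_i, -)$ are exact (since $\Lambda e_i$ is projective) and send $\Lambda$-projectives to projectives over $A$ respectively $B$.

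Two basic facts follow. (i) For any $A$-module $X$, $\pd_\Lambda(X,0,0)=\pd_A(X)$: one direction comes from lifting a projective $A$-resolution to a $\Lambda$-resolution by projectives of the first family; the reverse is obtained by applying $e_1\cdot$ to a $\Lambda$-resolution. (ii) For any $\Lambda$-module $N$, $\pd_B(e_2 N)\le\pd_\Lambda(N)$, by applying the exact, projective-preserving functor $e_2\cdot$ to a projective $\Lambda$-resolution of $N$.

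The main step is to bound $\pd_\Lambda(N)$ for a $\Lambda$-module $N$ of finite $\Lambda$-pd. Fact (ii) gives $\pd_B(e_2 N)\le\findim(B)=:b$. Setting $N':=\Omega^b_\Lambda N$, the resolution obtained by applying $e_2\cdot$ to a minimal projective $\Lambda$-resolution of $N$ is a (not necessarily minimal) projective $B$-resolution of $e_2 N$, and its $b$-th syzygy $e_2 N'$ is then projective over $B$ (any $b$-th syzygy of a module of $B$-pd at most $b$ is projective). The canonical $\Lambda$-linear map $\Lambda e_2\otimes_B e_2 N'\to N'$ is the identity on $e_2$-parts, so its kernel $K$ and cokernel $C$ are supported only in the $e_1$-component and are therefore $A$-modules $(K,0,0)$ and $(C,0,0)$. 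The two resulting short exact sequences, together with fact (i), give $\pd_\Lambda(N')\le\max\{\pd_A(K)+1,\pd_A(C)\}$, and adding the syzygy shift of $b$ provides the sought bound on $\pd_\Lambda(N)$.

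The main obstacle is to ensure that $K$ and $C$ have \emph{finite} $A$-projective dimension, so that the uniform bound $\pd_A\le\findim(A)$ applies. A priori $K=\ker\varphi'$ and $C=\Coker\varphi'$ are arbitrary $A$-modules, with no built-in pd control; the failure of $\Lambda e_2$ to be flat over $B$ means this finiteness cannot be read off directly. The plan is to propagate finiteness through the two short exact sequences using the long exact Ext sequences: $\pd_\Lambda(N')<\infty$ forces $\pd_\Lambda(K)$ and $\pd_\Lambda(C)$ to be simultaneously finite or simultaneously infinite, and ruling out the infinite case requires an auxiliary iteration on a complexity invariant involving Tor of $M$ over $B$, combined with the standing hypotheses $\findim(A),\findim(B)<\infty$. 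This interlocking finiteness argument is the technical heart of the proof.
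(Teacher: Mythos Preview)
The paper does not prove this statement; it is quoted from \cite{FGR} and used as a black box. So there is no ``paper's proof'' to compare against, and your proposal must stand on its own.

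Your outline is on the right track but contains a genuine gap at exactly the point you flag as ``the main obstacle,'' and your proposed fix (an ``auxiliary iteration on a complexity invariant involving $\Tor$ of $M$ over $B$'') is not an argument --- it is a placeholder for one. As written, you have no way to conclude that $K$ and $C$ have finite $A$-projective dimension, so the bound never closes.

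The gap is in fact an artifact of a mistake earlier in the argument. You write that applying $e_2(-)$ to a \emph{minimal} projective $\Lambda$-resolution of $N$ yields a ``not necessarily minimal'' projective $B$-resolution of $e_2N$. This is false: because $e_2\,\rad(\Lambda)=\rad(B)\,e_2$ (check this directly from $\rad(\Lambda)=\left(\begin{smallmatrix}\rad A & M\\ 0 & \rad B\end{smallmatrix}\right)$), the image of each differential, after applying $e_2$, still lands in the radical. Hence the induced $B$-resolution \emph{is} minimal. Consequently, for $N':=\Omega_\Lambda^{b+1}(N)$ with $b=\findim(B)\ge\pd_B(e_2N)$, one gets $e_2N'=\Omega_B^{b+1}(e_2N)=0$, not merely $B$-projective. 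Thus $N'=(X',0,0)$ is an $A$-module in your sense, your fact (i) gives $\pd_\Lambda N'=\pd_A X'<\infty$, hence $\pd_A X'\le\findim(A)$, and $\pd_\Lambda N\le \findim(A)+\findim(B)+1$. No $K$, no $C$, no Tor iteration.

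A smaller point: your blanket claim that $e_1(-)$ sends $\Lambda$-projectives to $A$-projectives is wrong, since $e_1(\Lambda e_2)=M$ need not be $A$-projective. Your fact (i) is nevertheless true, but for a different reason: a \emph{minimal} $\Lambda$-resolution of $(X,0,0)$ involves only projectives of the first family $(P,0,0)$, so it already is an $A$-resolution.
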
  

\emph{Proof of Theorem \ref{thm-appl}}.  Suppose that  the finitistic dimension 
of an algebra $K\cQ/I$ is finite if $I$ is admissible and $\cQ$ is path connected.
Now let  $\cQ$  be an arbitrary  finite quiver, $I$ be an admissible ideal in
$k\cQ$, and $\Lambda=K\cQ/I$.
We wish to show that the finitistic dimension of $\Lambda$ is finite.

We proceed by induction on the number of nontrivial path    connected components. 
 If
there are no nontrivial path connected components,  then $\cQ$ contains no cycles and 
$\Lambda$ has finite global dimension.  Hence.
we are done. 

Suppose there is only one nontrivial path connected component, $\cH$.  Then $\cH$, being
convex, is the homological heart of $\cQ$.  
By Theorem \ref{thm-hh}, we may assume that $\cQ$ equals the homological heart $\cH$.  Since
we are assuming that $\cH$ is path connected, 
the hypothesis implies that the
finitistic dimension of  $\Lambda$ is finite.

Next suppose that $\cQ$ has at least $k$ nontrivial path connected components with $k\ge 2$.
By induction, we assume that if a quiver $\cQ'$ has $k-1$ nontrivial path connected
components and $I'$ is an admissible ideal in $K\cQ'$, then the finitistic dimension of
$K\cQ'/I'$ is finite.  The homological heart of $\cQ$, $\cH$, has $k$
nontrivial path connected components.
By Theorem \ref{thm-hh} we may assume that  $\cQ=\cH$.  

Consider the set $A$ of arrows of $\cQ$ that do not lie on a cycle in $\cQ$. 
We form a new graph $ \Delta$ whose vertex set is the set of the  connected path components
of $\cQ$ (including the trivial components) and the arrow set is $A$ where if  $a\in A $ with $a\colon v\to w$ in $ \cQ$ then, as an
arrow in $\Delta$, $a$ is an arrow from the   path connected component of $\cQ$ containing $v$ to
the path connected component of $\cQ$ containing $w$.  Since arrows in $A$ cannot lie on an
oriented cycle in $\cQ$, $\Delta$ contains no cycles. Thus, some vertex in  $\Delta$ is a source.  
  It follows that there is the path connected component
$\cL$ in $\cQ$
associated a source vertex in $\Delta$ such that $\cL$ is a path connected subquiver of $\cQ $ and
$\cL^-$ is empty.  

We claim $\cL$ is nontrivial.  If $\cL$ is trivial then $\cL_0=\{v\}$ for some
$v\in\cQ_0$.  Since $v$ is not contained in a cycle,  $v\in Y$ as defined in Section \ref{homo heart}.  Hence there is a path from
$v$ to a nontrivial path connected component to $v$.  But then $\cL^-$ is not empty
which is a contradiction.

Let $\cL$ be a nontrivial path connected
component of $\cQ$ such that $\cL^-$ is empty.    Let $\cM$ be the subquiver of $\cQ$ with
vertex set $\cQ_0\setminus \cL_0$, let $e$ be the idempotent associated to the sum of the
vertices of $\cL$, and $e'=1-e$. Note that $ \cM$ has $k-1$ path connected components.  
Then $\Lambda =\left(\begin{array}{cc}e\Lambda e& e\Lambda e'\\
   e'\Lambda e&e'\Lambda e' \end{array}\right)$.   Since  $\cL^-$ is empty, we see that
$e'\Lambda e=0$.   The fact that $e'\Lambda e=0$ implies that the quiver of $e'\Lambda e'$ is $\cM$; that
is, there is no path from a vertex in $\cM$ to a vertex in $\cM$ such that all the `internal' vertices
of the parth are in $\cL$.   

Summarizing, we have  $\Lambda =\left(\begin{array}{cc}e\Lambda e& e\Lambda e'\\
   0&e'\Lambda e' \end{array}\right)$ such that the quiver of $e\Lambda e$ is $ \cL$ and the quiver
of $e'\Lambda e'$ is $\cM$.  We have that $\cL$ is  path connected and hence, by  assumption
has finite finitistic dimension.  We  also have that $\cM$ has $k-1$	
path connected components and, by the
induction hypothesis, has finite finitistic dimension.  The result now follows from
Theorem \ref{thm-fgr}. \qed
\vskip .2in

We end with a final application.  We say a cycle is \emph{simple} if it has no repeated
vertices.   We say a quiver $\cQ$ is of \emph{simple cycle type} if every nontrivial
path connected component is a simple cycle.

\begin{prop}\label{prop-simple} If $\cQ$ is of simple cycle type, then
the finitistic dimension of $K\cQ/I$ is finite, for all admissible ideals $I$.
\end{prop}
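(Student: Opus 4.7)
The plan is to combine the main reduction of Section \ref{Application} with a classical result about Nakayama algebras.

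First, by Theorem \ref{thm-appl}, it suffices to establish the statement for path connected quivers $\cQ$. So I would begin by reducing to the case where $\cQ$ is path connected and observing what this forces on the shape of $\cQ$ under the simple cycle type hypothesis. If $\cQ$ is path connected and of simple cycle type, then $\cQ$ has at most one nontrivial path connected component (namely, the whole quiver, provided it has any arrow); any other vertex would fail to be in a cycle together with a vertex of that component. So either $\cQ$ consists of a single vertex with no arrows (in which case $K\cQ/I \cong K$ and $\findim = 0$), or $\cQ$ is itself a single simple cycle on $n$ vertices.

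Second, I would analyze the remaining case. If $\cQ$ is a simple cycle on vertices $v_1,\dots,v_n$ with arrows $a_i\colon v_i\to v_{i+1}$ (indices mod $n$), then at every vertex there is exactly one arrow starting and exactly one arrow ending. This implies that for any admissible ideal $I$, the algebra $\Lambda = K\cQ/I$ has the property that each indecomposable projective and each indecomposable injective module is uniserial; that is, $\Lambda$ is a Nakayama algebra.

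Third, I would invoke the classical fact that every Nakayama algebra has finite finitistic dimension (Gustafson's theorem: for a Nakayama algebra with $n$ simple modules, $\findim\le 2n$, and in particular is finite). This gives $\findim(\Lambda)<\infty$ and completes the proof.

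The main obstacle here is not really an obstacle but a citation: one needs to make clear the passage from ``simple cycle, admissible relations'' to ``Nakayama algebra'' and to cite (or briefly indicate) the finiteness of the finitistic dimension for Nakayama algebras. Everything else is a direct application of Theorem \ref{thm-appl} together with the structural observation on path connected quivers of simple cycle type.
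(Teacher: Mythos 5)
Your handling of the base case is fine and genuinely different from the paper's: you observe that a path connected quiver of simple cycle type is a single vertex or a single simple cycle, recognize that an admissible quotient of the path algebra of a simple cycle is a Nakayama algebra (each vertex has exactly one arrow in and one arrow out, so the indecomposable projectives and injectives are uniserial), and invoke the classical bound for Nakayama algebras. The paper instead argues that such a quotient is (isomorphic to) a \emph{monomial} algebra and cites \cite{GKK}; either route is acceptable and gives the same conclusion.

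However, your reduction step contains a genuine logical gap. Theorem~\ref{thm-appl} has the form ``if the finitistic dimension conjecture holds for \emph{every} path connected quiver, then it holds for every quiver.'' You cannot invoke it to conclude ``it suffices to treat path connected quivers of simple cycle type,'' because you will never have verified its hypothesis: you only establish finiteness for a restricted class of path connected quivers. What you actually need is the implication ``if it holds for every path connected quiver of simple cycle type, then it holds for every quiver of simple cycle type,'' and that does not follow from the statement of Theorem~\ref{thm-appl} --- it follows from re-running its \emph{proof}, after observing that the property of being of simple cycle type is preserved under passage to the homological heart and under removal of a path connected component (the constructions $\cH$, $\cL$, $\cM$ appearing in the induction), so the inductive step via the Fossum--Griffith--Reiten triangular matrix theorem still produces algebras whose quivers are of simple cycle type. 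This is precisely what the paper means by ``mirrors the proof of Theorem~\ref{thm-appl}.'' Your write-up should say the argument of Theorem~\ref{thm-appl} is repeated verbatim, not that the theorem itself is applied; as written, the step is not justified.
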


\begin{proof}
We note that if $C$ is a simple cycle then $KC/L$ has finite
finitistic dimension for any admissible ideal $L$ since
$KC/L$ is a monomial algebra \cite{GKK}. Thus, if $\cQ$ is
of simple cycle type, $I$ is an  admissible ideal  in
$K\cQ$, and $C$ is a nontrivial path connected component,
then $KC/(KC\cap I)$ has finite finitistic dimension. The remainder of the 
proof is mirrors the  proof of  Theorem \ref{thm-appl}.

\end{proof}

\bibliographystyle{plain}

\end{document}